\newtheorem{theorem}{Theorem}
\newtheorem{remark}{Remark}
\newtheorem{lemma}{Lemma}
\newtheorem{corollary}{Corollary}
\author[Broadbridge]{Philip  Broadbridge}
\address{%
La Trobe University,\\
Melbourne,\\
Australia,}
\email{p.broadbridge@latrobe.edu.au}
\thanks{Philip Broadbridge, La Trobe University, Melbourne, Australia \href{p.broadbridge@latrobe.edu.au}{p.broadbridge@latrobe.edu.au}  \\
Illia Donhauzer, corresponding author, La Trobe University, Melbourne, Australia \href{i.donhauzer@latrobe.edu.au}{i.donhauzer@latrobe.edu.au} \\
Andriy Olenko, La Trobe University, Melbourne, Australia \href{a.olenko@latrobe.edu.au}{a.olenko@latrobe.edu.au} \\
This research was supported under the Australian Research Council's Discovery Projects funding scheme (project number  DP220101680).  I.Donhauzer and A.Olenko also would like to thank for partial support provided by the La Trobe SEMS CaRE grant.}
\author[Donhauzer]{Illia Donhauzer}
\address{Corresponding author,\\
La Trobe University,\\
Melbourne,\\
Australia,}
\email{i.donhauzer@latrobe.edu.au}
\author[Olenko]{Andriy Olenko}
\address{%
La Trobe University,\\
Melbourne,\\
Australia,}
\email{a.olenko@latrobe.edu.au}
\title{Stochastic diffusion within expanding space-time}
\begin{document}
\maketitle

\textbf{Abstract} The paper examines stochastic diffusion within an expanding space-time framework. It starts with providing a rationale for the considered model and its motivation from cosmology where the expansion of space-time is used in modelling various phenomena. Contrary to other results in the literature, the considered in this paper general stochastic model takes into consideration the expansion of space-time. It leads to a stochastic diffusion equations  with coefficients that are non-constant and evolve with the expansion factor. Then, the Cauchy problem with random initial conditions is posed and investigated. The exact  solution to a stochastic diffusion equation on the expanding sphere is derived. Various probabilistic properties of the solution are studied, including its dependence structure, evolution of the angular power spectrum and local properties of the solution and its approximations by finite truncations. The paper also characterises the extremal behaviour of the random solution by establishing upper bounds on the probabilities of large deviations. Numerical studies are undertaken to illustrate the obtained theoretical results and demonstrate the evolution of the random solution.

\textbf{Keywords}  Stochastic partial differential equation,
 Spherical random field, Approximation errors, Excursion probability, Cosmic microwave background

\textbf{Mathematics Subject Classification} 35R01 35R60 60G60 60G15 60H15 33C55 35P10 35Q85 41A25

\section{Introduction}

The NASA mission WMAP and the ESA (European Space Agency) mission Planck have been instrumental in collecting highly accurate cosmological data, resulting in a precise map depicting the distribution of Cosmic Microwave Background Radiation (CMB)  \cite{adam2016planck, ade2016planck}. It is expected that new experiments, in particular, within CMB-S4 Collaboration and  ESA's Euclid mission,  will provide measurements of the CMB at unprecedented precision.

The CMB spectrum indicates that since the last scattering around 380,000 years after the big bang, the universe has been transparent to electromagnetic radiation. However the universe is not transparent to charged cosmic ray particles. They are deviated by magnetic fields as they pass close to galaxies. Recent estimates of the number of galaxies in the observable universe range from $2\times10^{12}$ to $6\times10^{12}$. In any angular aperture of observation, there will be part of at least one galaxy. Extragalactic cosmic ray particles reach the earth in a small number of showers each year. They are distinguished from local galactic cosmic rays by their high particle energy, greater than $5 \times 10^{18} eV$. This corresponds to charged particles typically arriving at more than half the speed of light. They have likely been travelling for a very long time in cosmic terms, during which they would have been deviated by a number of galaxies. This results in a long-term diffusive redistribution of matter throughout the universe.

This dynamical process is very complicated. Effective diffusion occurs partly by scattering due to magnetic fields \cite{han2017observing} and also by motion of the magnetic field lines themselves \cite{jokipii1969stochastic}. It has also been argued that the temperature gradients formed from galaxies can effectively repel particles,  enough to avoid gravitational capture \cite{shtykovskiy2010thermal}.

Due to the long distances between galaxy clusters, scattering is intermittent, reasonably described by a fractional $\alpha$-stable L\'evy distribution with tail probabilities of order $|x|^{-\alpha}$ for exceeding a large displacement, and the distribution due to a local disturbance broadening asymptotically in proportion to $t^{1/\alpha}.$ Such a distribution can result from a fractional super-diffusion that is of order $\alpha<2$. Data from intra-galactic cosmic rays evidence values of $\alpha$ less than 0.5 \cite{buonocore2021anomalous}.

The cosmological missions have yielded high resolution maps of CMB. The necessity of modelling and analysing them have recently attracted increasing attention to the theory of spherical random fields. From the mathematical point of view and for modelling purposes, a map depicting the distribution of CMB can be regarded as a single realization of a random field on a sphere of a large radius. The sphere plays a role of the underlying space and expands in time. For the Dark-energy-dominated era, the expansion factor has the exponential form \cite{broadbridge2020solution}. This expansion impacts the stochastic diffusion and should be incorporated in a model for the evolution of CMB. Contrary to the other models in the literature, this paper takes into consideration the expansion of space-time, which leads to a stochastic diffusion equation with coefficients that are non-constant and evolve with the expansion factor.

We refer to the monograph \cite{marinucci2011random} for the systematic exposition of the main results of the theory of spherical random fields. The paper \cite{lang2015isotropic} studied isotropic random fields on high-dimensional spheres $\mathbb{S}^n$ and established  connections between the smoothness of the covariance kernel and the decay of their angular power spectrum, derived conditions for almost sure sample continuity and sample differentiability of spherical random fields, and obtained sufficient conditions for their $L_2$ continuity in terms of the decay of the angular power spectrum.

Another important direction of the modern theory of random fields is the exploration of extremes of random fields including fields given on manifolds, see the classical results in \cite{talagrand},  \cite{dudley1967sizes},  \cite{piterbarg1996asymptotic} and their modern generalisations. The monograph \cite{buldygin2000metric} examined extremes of sub-Gaussian fields while the expected Euler characteristic method developed by Adler and Taylor was demonstrated in \cite{adler2007random}. The publication \cite{cheng2016excursion} provided the asymptotics of excursion probabilities for both smooth and non-smooth  spherical Gaussian fields. Some inequalities for excursion probabilities for spherical sub-Gaussian random fields were obtained in \cite{sakhno2023estimates}. Another approach utilized limit theorems for sojourn measures, see \cite{leonenko_ruiz-medina_2023} and \cite{makogin}.

Stochastic partial differential equations (SPDE) are the main tool to model the evolution of spherical random fields over time. They have been extensively studied, see, for example, \cite{d2014coordinates}, \cite{lang2015isotropic}, \cite{orsingher1987stochastic} and the references therein. Several models were recently presented in \cite{anh2018approximation, broadbridge2019random, broadbridge2020spherically, LOV} that employed stochastic hyperbolic diffusion equations and modelled various types of evolution of spherical random fields.

This article integrates the aforementioned approaches and extend them to the context of SPDEs within an expanding space-time. The equations studied in this paper differ from the mentioned SPDEs and are  given as a hyperbolic diffusion on the expanding sphere. Motivated by cosmological applications, the exponential expansion is used in the considered model, which leads to a stochastic diffusion equation with non-constant coefficients.

The article also conducts the numerical analysis of the solution of the considered model. The numerical analysis section investigates the evolution of the solution of the studied SPDE over space-time, the structure of its  space-time dependencies, and its extremes. The CMB intensity map from the mission Planck and its spectrum  are used as initial conditions.  The numerical analysis confirms and visualizes the obtained theoretical results.

The main novelties of the paper include:
\begin{itemize}
    \item    The consideration of diffusion within an expanding space-time framework;
   \item     The examination of equations with non-constant coefficients dependent on the expansion factor values;
   \item     An exploration of both the local and asymptotic properties of the solutions and their respective approximations;
   \item     An analysis of excursion probabilities associated with the solutions and their approximations.
\end{itemize}

The paper is structured as follows: Section~\ref{sec2} provides  the main definitions and notations. Section~\ref{sec3} presents the diffusion model within an expanding time-space universe. Then, this section investigates  the initial-value problem for this diffusion equation and explores the properties of the derived non-random solutions. Section~\ref{sec3} is dedicated to the examination of the equation with random initial conditions. The solution to the equation and its associated covariance function are derived. Section~\ref{sec:approx} investigates  properties of stochastic solutions and their corresponding approximations. Sections~\ref{sec_excur} studies excursion probabilities related to the solutions and their approximations. Finally, Section~\ref{sec_num} presents simulation studies that illustrate the properties of the solutions and the obtained results.

\section{Main definitions and notations}\label{sec2}
This section reviews the main definitions and notations used in this paper and provides required background knowledge from the theory of random fields.

$C$ with subindices represents a generic finite positive constant. The values of constants are not necessarily the same in each appearance and may be changed depending on the expression.  $||\cdot||$ is the Euclidean norm in $\mathbb{R}^3,$ $Leb(\cdot)$ stands for the Lebesgue measure on the unit sphere $ \mathbb{S}^2 = \{\textbf{\textit{x}}: \ ||\textbf{\textit{x}}|| = 1,\ \textbf{\textit{x}}\in\mathbb{R}^3\}.$

The notation $\textit{\textbf{x}} = (x_1,x_2,x_{3})\in \mathbb{S}^2$ is used to define Euclidean
coordinates of points, while the notation $(\theta, \varphi), \ 0\leq\theta\leq\pi, \ 0\leq\varphi<2\pi,$ is used for the corresponding spherical coordinates.  They are related by the following transformations
\begin{align*}\label{spher_coord}
\begin{split}
&x_1  = \cos\theta_1, \\
&x_2 = \sin\theta_1\cos\theta_2, \\
&x_3 = \sin\theta_1\sin\theta_2.
\end{split}
\end{align*} In what follows, $ \Theta$ denotes the angular distance between two points  with spherical coordinates $(\theta, \varphi)$ and $(\theta', \varphi')$  on the unit sphere $\mathbb{S}^2.$

By $L^2(\mathbb{S}^2)$ we denote the Hilbert space of square integrable functions on $\mathbb{S}^2$ with the following canonical inner product \cite[Page 8]{atkinson2012spherical} \[ \langle   f,g \rangle_{L^2(\mathbb{S}^2)} = \int\limits_0^\pi\int\limits_0^{2\pi} f(\theta,\varphi)g^*(\theta,\varphi)\sin\theta d\theta d \varphi,\ \ \ f,g\in L^2(\mathbb{S}^2), \] where $g^*(\cdot)$ denotes a complex conjugate of $g(\cdot),$ and the induced  norm
\[ ||f||^2_{L^2(\mathbb{S}^2)} = \int\limits_0^\pi\int\limits_0^{2\pi} |f(\theta,\varphi)|^2\sin\theta d\theta d \varphi.\]

The spherical harmonics $Y_{lm}(\theta,\varphi), \ l=0,1,..., m = -l,..l,$  are given as
\[ Y_{lm}(\theta,\varphi) = d_{lm}\exp\{ im\varphi \}P_l^m(\cos\theta), \] where
\[d_{lm} = (-1)^m\sqrt{\frac{2l+1}{4\pi}\frac{(l-m)!}{(l+m)!}},\]  $P_l^m(\cdot)$ are the associated Legendre polynomials with the indices $l$ and $m,$ and $P_l(\cdot)$ is the $l$th Legendre polynomial
\[ P_l^m(x) = (-1)^m(1-x^2)^{\frac{m}{2}}\frac{d^m}{dx^m}P_l(x), \ \ \ P_l(x)=\frac{1}{2^ll!}\frac{d^l}{dx^l}(x^2-1)^l. \]

The spherical harmonics $Y_{lm}(\theta,\varphi), \ l=0,1,..., m = -l,..l,$ form an orthogonal basis in the Hilbert space $L^2(\mathbb{S}^2),$  i.e.
\[ \langle   Y_{lm},Y_{l'm'} \rangle_{L^2(\mathbb{S}^2)} = \delta_{ll'}\delta_{mm'},\] where $\delta_{ll'}$ is the Kronecker delta function.

The addition formula for the spherical harmonics states that
\begin{equation}\label{add_form} \sum_{m=-l}^lY_{lm}(\theta, \varphi) Y^*_{lm}(\theta', \varphi') = \frac{2l+1}{4\pi}P_l(\cos \Theta).
\end{equation}

 For any $f\in L^2(\mathbb{S}^2)$ it holds
\[ f(\theta,\varphi) = \sum_{l=0}^\infty\sum_{m=-l}^l f_{lm}Y_{lm}(\theta,\varphi),\] where $f_{lm}\in \mathbb{C}, \ l=0,1,...,\ m=-l,...,l.$

A spherical random field $T(\theta,\varphi),$  is a collection of random variables given on  a common complete probability space $\{\Omega, \mathfrak{F}, P  \}$ and indexed by parameters $\theta,\varphi.$ In this paper, we consider real-valued spherical random fields that are continuous and twice-differentiable with probability $1.$

By $L^2(\Omega\times\mathbb{S}^2)$ we denote the Hilbert space of spherical random fields that have a finite norm
\[ ||T||_{L^2(\Omega\times\mathbb{S}^2)}= E\left( \int\limits_{0}^{\pi}\int\limits_{0}^{2\pi} T^2(\theta,\varphi)  \sin\theta d\varphi d\theta \right).\]

 A spherical random field $T(\theta,\varphi)$ is called isotropic if its finite-dimensional distributions are invariant with respect to rotation transformations, i.e. if
\[P(T(\textit{\textbf{x}}_1)<a_1,...,T(\textit{\textbf{x}}_k)<a_k) =  P(T(A\textit{\textbf{x}}_1)<a_1,...,T(A\textit{\textbf{x}}_1)<a_k)\] for any $\textit{\textbf{x}}_i\in\mathbb{S}^2,$ $a_i\in\mathbb{R},\ i=1,2,...k,\ k\in\mathbb{N}$ and any rotation matrix $A.$

An isotropic spherical random field $T(\theta,\varphi) \in L^2(\Omega\times\mathbb{S}^2)$ allows a representation as the following Laplace series~{\rm \cite{lang2015isotropic}}
\begin{equation}\label{eq:repres} T(\theta, \varphi) = \sum_{l=0}^\infty\sum_{m=-l}^l a_{lm} Y_{lm}(\theta,\varphi),\end{equation} where the convergence is in the space $L^2(\Omega\times\mathbb{S}^2),$ i.e..
\[ \lim\limits_{L\to\infty} E\left( \int\limits_{0}^{\pi}\int\limits_{0}^{2\pi} \left(T(\theta,\varphi) - \sum_{l=0}^L\sum_{m=-l}^l a_{lm} Y_{lm}(\theta,\varphi)\right)^2  \sin\theta d\varphi d\theta \right)=0.\] The random variables $a_{lm}, \ l=0,1,...,\ m=-l,...,l,$ are given by  the next stochastic integrals defined in the mean-square sense

\begin{equation}\label{eq:alm} a_{lm} = \int\limits_{0}^\pi \int\limits_{0}^{2\pi}T(\theta,\varphi)Y^*(\theta,\varphi)\sin\theta d\theta d\varphi.\end{equation}

Note that a spherical random field $T(\theta, \varphi)$ takes a constant random value, i.e. $T(\theta, \varphi) = \xi,\ \xi\in L^2(\Omega), \forall \theta, \varphi,$ with probability 1, if and only if $a_{lm}=0$ a.s. for $l=1,2,...$ Indeed, by the properties of spherical harmonics, for $T(\theta, \varphi) = \xi$ the
integrals in \eqref{eq:alm} equal to $0$ a.s.  for $l=1,2,...$ The inverse statement is trivial due to \eqref{eq:repres}.

If $T(\theta,\varphi)\in L^2(\Omega\times\mathbb{S}^2)$ is a centered real-valued Gaussian random field, then it allows the representation \eqref{eq:repres}, where $a_{lm},\ l = 0,1,..., \ m=-l,...,l,$ are Gaussian random variables such that
\begin{equation*}
a_{lm} = (-1)^ma_{l(-m)},
\end{equation*}
\begin{equation*}
Ea_{lm}=0, \ Ea_{lm}a_{l'm'}^* = \delta_m^{m'}\delta_l^{l'}C_l.
\end{equation*}  The sequence $\{ C_l, l=0,1,...\}$ is called the angular power spectrum of the isotropic random field $T(\theta,\varphi).$ The series \eqref{eq:repres} converges in the $L_2(\Omega\times\mathbb{S}^2)$ sense if it holds true  {\rm \cite[Section 2]{lang2015isotropic}} that
\begin{equation}\label{eq:ang_conv} \sum\limits_{l=0}^\infty C_l(2l+1)<+\infty. \end{equation}
We assume that condition (\ref{eq:ang_conv}) remains valid  throughout all subsequent sections of the paper.

The Bessel function of the first kind $J_\nu(x), \ x\geq0,\ \nu\in\mathbb{R},$ is given by the following series~\cite[9.1.10]{abramowitz1968handbook}
\[J_\nu(x) = \left( \frac{1}{2}x \right)^\nu \sum\limits_{k=0}^\infty \frac{\left( -\frac{1}{4}x^2 \right)^k}{k! \Gamma(\nu+k+1)}.\] It has a finite value at the origin if $\nu\geq 0$ and a singularity if $\nu<0.$

For a noninteger $\nu,$ the Bessel function of the second kind $Y_\nu(x), \ x\geq0,$ is given as~\cite[9.1.2]{abramowitz1968handbook}
\[ Y_\nu(x) = \frac{J_\nu(x)\cos(\nu\pi)-J_{-\nu}(x)}{\sin(\nu\pi)}.\] The Bessel function of the second kind $Y_n(x), \ x\geq0,$ of a positive integer order $n\in\mathbb{N}$ is obtained as the limit $Y_n(x) = \lim\limits_{\nu\to n}Y_\nu(x),$ see \cite[9.1.11]{abramowitz1968handbook}.

\section{Spherical diffusion in expanding space-time}
\label{sec3}

This section provides a justification for the model and obtain certain properties of solutions to the non-random version of the considered diffusion equations.

The universe is observed to have spatial cross sections with zero curvature. The Friedmann–Lema\^{i}tre–\\Robertson–Walker metric (FLRW) on the sphere of radius $r$ is
\[ ds^2 = c^2 dt^2 - a^2(t)(r^2d\theta^2 + r^2\sin^2\theta d\varphi^2),\] in the spherical space coordinates $(\theta, \varphi)$ and the cosmic time coordinate $t,$ where the term $a(\cdot)$ is the expansion factor. Note that for the Dark-energy-dominated era \cite{hill2018formal,hill2019some}, the expansion factor has the exponential form of the maximally symmetric de Sitter universe, $a(t) = e^{\sqrt{\frac{\Lambda}{3}}ct}$ \cite{broadbridge2020solution,Weinberg}.

The spherical diffusion is given by the following equation

\begin{equation}\label{eq} \frac{1}{D} \frac{\partial \widetilde{u}}{\partial t} + g^{\mu \nu} \nabla_\mu \nabla_{\nu} \widetilde{u} =0,
\end{equation}  where $\nabla_\mu$ is the covariant derivative operator and $g^{\mu \nu}$ are the elements of the contravariant metric tensor where the indices take values from 0 to 2. We also impose the following initial conditions
\begin{equation}\label{cond_t}
\widetilde{u}(t, \theta, \varphi)|_{t = 0} = \delta(\theta, \varphi), \ \ \ \frac{\partial \widetilde{u}(t, \theta, \varphi)}{\partial t}{\biggl |}_{t = 0} =0.
\end{equation}

The covariant derivatives do not commute when they act on vectors, see \cite{broadbridge2023non}.
However, the Laplace-Beltrami operator on the sphere can be expanded unambiguously in terms of partial derivatives as
\begin{equation} \label{LB}
g^{\mu \nu}\nabla_\mu\nabla_\nu \widetilde{u} = \frac{1}{\sqrt{|g|}}\partial_\mu\sqrt{|g|}g^{\mu\nu} \partial_\nu \widetilde{u},
\end{equation} where $g$ is the covariant metric tensor. For the above defined FLRW metric, $g$ is the diagonal matrix with the elements $c^{2}, -r^2a^{2}(t), -r^2a^{2}(t)sin^{2}\theta,$ and the contravariant metric tensor is the inverse of $g.$

Changing to a conformal time coordinate
\[ \eta = \int\limits_0^t\frac{1}{a(s)}ds = \eta_\infty\big(1-e^{- t/\eta_\infty}\big),\] the FLRW metric becomes
\[ ds^2 = e^2(\eta)(c^2 d\eta^2 -r^2d\theta^2 - r^2\sin^2\theta d\varphi^2),\] where $e(\eta):=\frac{\eta_\infty}{\eta_\infty-\eta} = a(t), \ \eta_\infty = \frac{1}{c}\sqrt{\frac{3}{\Lambda}},$ and the covariant metric tensor $g$ becomes the diagonal matrix with the elements $c^{2}e^2(\eta), -r^2e^2(\eta), -r^2e^2(\eta)\sin^{2}\theta.$

Using \eqref{LB} and the above expression of the covariant metric tensor in terms of the conformal time, the equation \eqref{eq} can be written in the following form

\begin{equation} \label{eq_main}  \left(\frac{e(\eta)}{D} + \frac{e'}{c^2e}\right)\frac{\partial \widetilde{u}}{\partial \eta} + \frac{1}{c^2} \frac{\partial^2 \widetilde{u}}{\partial \eta^2} - \frac{1}{r^2} \frac{\partial^2\widetilde{u}}{\partial \theta^2} - \frac{1}{r^2 \sin^2\theta}\frac{\partial^2\widetilde{u}}{\partial \varphi^2} -\frac{\cot\theta}{r^2}\frac{\partial \widetilde{u}}{\partial \theta} = 0.
\end{equation} As  for $t = 0$ it holds that $\eta = 0$ and $\frac{d\eta}{dt}\big|_{t=0} = 1,$ the initial conditions \eqref{cond_t} become

\begin{equation}
\label{cond}
\widetilde{u}(\eta, \theta, \varphi)|_{\eta = 0} = \delta(\theta, \varphi), \ \ \ \frac{\partial \widetilde{u}(\eta, \theta, \varphi)}{\partial \eta}{\biggl |}_{\eta = 0} =0.
\end{equation} The equation \eqref{eq_main} is different compared to the models studied in the literature, see \cite{anh2018approximation, broadbridge2019random, broadbridge2020spherically, LOV} and the references therein. Namely,  for the underlying FLRW space-time metric  the coefficient of the term $\frac{\partial \widetilde{u}}{\partial \eta}$  in  \eqref{eq_main} is not constant and depends on the evolution of the expansion factor~$e(\eta).$

\begin{theorem}
\label{th1}
The solution $\widetilde{u}(\eta, \theta, \varphi)$ of the equation \eqref{eq_main} with the initial conditions \eqref{cond} is given by the following series

\[\sum_{l=0}^{\infty} F_l(\eta) \sum_{m=-l}^l Y_{lm}^*(\textbf{\textit{0}}) Y_{lm}(\theta, \varphi),\] where

\begin{equation}\label{eq:fl}F_l(\eta) =
    \begin{cases}
      1, \ l =0,\\
      (\eta_\infty - \eta)^\nu\big(K_1^{(l)}J_\nu(z_l(\eta_\infty - \eta))+K_2^{(l)}Y_{\nu}(z_l(\eta_\infty - \eta))\big), \ l\in\mathbb{N},
    \end{cases}\
\end{equation}

\[ K_1^{(l)} = \frac{\pi z_l  Y_{\nu-1}(z_l\eta_\infty)}{2 \eta_\infty^{\nu-1}}, \ \ \ K_2^{(l)} = -\frac{\pi z_l  J_{\nu-1}(z_l\eta_\infty)}{2 \eta_\infty^{\nu-1}},\] $z_l = \frac{c\sqrt{l(l+1)}}{r},$  $\nu = \frac{c^2\eta_\infty}{2D}+1$ and $\textbf{\textit{0}}$ denotes a point on the unit sphere with the spherical coordinates $\theta = 0, \ \varphi=0.$

\end{theorem}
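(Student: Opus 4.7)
The natural approach is separation of variables in the spherical harmonic basis. Since the $Y_{lm}$ diagonalise the angular part of \eqref{eq_main} (they are eigenfunctions of the Laplace--Beltrami operator on $\mathbb{S}^2$ with eigenvalue $-l(l+1)$), writing $\widetilde{u}(\eta,\theta,\varphi) = \sum_{l,m} F_{lm}(\eta) Y_{lm}(\theta,\varphi)$ and exploiting orthogonality decouples the PDE into independent scalar ODEs in $\eta$. Using $e(\eta) = \eta_\infty/(\eta_\infty - \eta)$, so that $e'/e = 1/(\eta_\infty - \eta)$, the coefficient multiplying $\partial_\eta \widetilde{u}$ simplifies and (after multiplying through by $c^{2}$) the ODE becomes
\[
F_{lm}''(\eta) + \frac{2\nu-1}{\eta_\infty - \eta}\, F_{lm}'(\eta) + z_l^{2}\, F_{lm}(\eta) = 0,
\]
with $\nu$ and $z_l$ as in the statement. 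Since neither the coefficients nor the initial data (see below) depend on $m$, it is enough to solve for a single $F_l := F_{lm}$.

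The next step is to convert this ODE to Bessel form. Setting $\tau = \eta_\infty - \eta$ and substituting $F_l(\eta) = \tau^{\nu} G_l(\tau)$ simultaneously eliminates the $\nu$-dependent first-derivative term and produces
\[
G_l''(\tau) + \frac{1}{\tau} G_l'(\tau) + \left(z_l^{2} - \frac{\nu^{2}}{\tau^{2}}\right) G_l(\tau) = 0,
\]
which after the rescaling $u = z_l \tau$ is Bessel's equation of order $\nu$. Hence for $l \geq 1$ the general solution has exactly the shape in \eqref{eq:fl}. The case $l = 0$ is degenerate: $z_{0} = 0$, the equation reduces to a first-order one for $F_0'$, and the stated initial data force $F_0 \equiv 1$.

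Finally I would fix $K_1^{(l)}$ and $K_2^{(l)}$ from the initial conditions. Expanding the delta on the sphere via the reproducing kernel identity $\delta(\theta,\varphi) = \sum_{l,m} Y_{lm}^{*}(\textbf{\textit{0}}) Y_{lm}(\theta,\varphi)$ and matching \eqref{cond} coefficient-wise yields the scalar requirements $F_l(0) = 1$ and $F_l'(0) = 0$. For the derivative condition I would apply the Bessel identities $\frac{d}{dx}[x^{\nu} J_{\nu}(x)] = x^{\nu} J_{\nu-1}(x)$ and its $Y_\nu$ analogue, which collapse $F_l'(0) = 0$ to the clean linear relation
\[
K_1^{(l)} J_{\nu-1}(z_l \eta_\infty) + K_2^{(l)} Y_{\nu-1}(z_l \eta_\infty) = 0.
\]
Together with $F_l(0) = 1$ this is a $2\times 2$ linear system whose determinant equals the Bessel Wronskian
\[
J_{\nu}(x) Y_{\nu-1}(x) - J_{\nu-1}(x) Y_{\nu}(x) = \frac{2}{\pi x}
\]
evaluated at $x = z_l \eta_\infty$; inverting the system then reproduces the claimed expressions for $K_1^{(l)}, K_2^{(l)}$. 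The most delicate point I anticipate is the substitution $F_l = \tau^{\nu} G_l$: the specific exponent $\nu = c^{2}\eta_\infty/(2D) + 1$ is precisely what is needed to annihilate the first-derivative term and to surface Bessel's equation, after which the Wronskian identity does the rest of the algebra.
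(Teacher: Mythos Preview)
Your proposal is correct and follows essentially the same route as the paper: separation of variables in the spherical harmonic basis, reduction of the radial ODE to Bessel's equation via the substitution $\tau=\eta_\infty-\eta$ and a power-of-$\tau$ factor, and determination of the constants from $F_l(0)=1$, $F_l'(0)=0$ using the Bessel Wronskian. Your computation is in fact a bit tidier---you use $(x^{\nu}J_{\nu})'=x^{\nu}J_{\nu-1}$ directly where the paper first expands $F_l'$ via $J_{\nu}'=\tfrac12(J_{\nu-1}-J_{\nu+1})$ and simplifies afterwards, and you carry out the substitution $F_l=\tau^{\nu}G_l$ explicitly rather than citing Watson---but the underlying argument is the same.
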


\begin{proof}

Let $\widetilde{u} = L(\theta)Z(\varphi)F(\eta)$ be the solution of \eqref{eq_main}, then, after multiplying the equation by $\frac{r^2sin^2\theta}{\widetilde{u}},$ one gets

\[ r^2 \sin^2\theta E(\eta)  \frac{F'}{F} + \frac{r^2 \sin^2\theta}{c^2}\frac{F''}{F} - \sin^2\theta \frac{L''}{L}  - \sin \theta \cos \theta \frac{L'}{L}  = \frac{Z''}{Z} = -m^2,\] where $E(\eta) = \frac{e(\eta)}{D} + \frac{e'}{c^2e},$ and $m$ is a separation constant. Thus, $Z(\varphi)=e^{im\varphi}$ and as it is $2\pi$-periodic, $m$ must be an integer.

Now separating the independent variable $\theta$ one gets
\[r^2 E(\eta) \frac{F'}{F} + \frac{r^2}{c^2}\frac{F''}{F}= \frac{L''}{L} + cot\theta \frac{L'}{L} - \frac{m^2}{\sin^2\theta} = -l(l+1).\] By substituting  $x = cos\theta,$ this equation for $\theta$ is equivalent to the next associated Legendre equation \cite[page 648]{arfken1972mathematical}

\[ (1-x^2)\frac{d^2L}{dx^2} -2x \frac{dL}{dx} -\frac{m^2 L}{1-x^2} +l(l+1)L = 0.\] The solution of the above equation has a singularity if $l\notin\mathbb{Z}.$ If $l\in\mathbb{Z},$ then the general solution is $C_1 P_l^m(cos \theta) + C_2 Q_l^m(cos \theta),$ where $P_l^m(\cdot)$ and $Q_l^m(\cdot)$ are the associated Legendre polynomial and the Legendre function of the second kind. As $Q_l^m(x)$ is singular at $x = \pm 1,$ $L(\theta) = P_l^m(\cos\theta).$

As $e(\eta) = \frac{\eta_\infty}{\eta_\infty-\eta},$ the equation for the separated independent variable $\eta$ is
\begin{equation}
\label{eq_eta}
F'' + \frac{\eta_\infty c^2+D}{D(\eta_\infty - \eta)}F' + \frac{l(l+1)c^2}{r^2} F = 0.
\end{equation}

Let us first solve the above equation for $l = 0$ and denote the solution by $F_0(\eta).$ By denoting $F_0' = P,$ the equation \eqref{eq_eta} is equivalent to

\[P' + \frac{\eta_\infty c^2+D}{D(\eta_\infty - \eta)}P = 0,\] from which follows that $P = K_1^{(0)}(\eta_\infty-\eta)^{\frac{\eta_\infty c^2}{D}+1},$ and $F_0 = \int P d\eta = K_1^{(0)}(\eta_\infty-\eta)^{2\nu} + K_2^{(0)},\ \nu = \frac{\eta_\infty c^2}{2D}+1,$ where the superscript $(0)$ means that the constants $K_1^{(0)}$ and $K_2^{(0)}$ correspond to the solution of \eqref{eq_eta} with $l = 0$.

Now let us solve the equation \eqref{eq_eta} for $l\in \mathbb{N}.$  By setting  $x=\eta_\infty-\eta,$ one transforms \eqref{eq_eta} to

\[ \frac{d^2F}{dx^2} + \left(-\left(\frac{\eta_\infty c^2}{D}+2\right) + 1\right)\frac{1}{x}\frac{dF}{dx}  + \frac{l(l+1)c^2}{r^2} F = 0.\] Denote the general solution of the above equation by $F_l$ for $l\in\mathbb{N}.$ Then, by \cite[page 97]{watson1944} one gets

\[ F_l = (\eta_\infty - \eta)^\nu\left(K_1^{(l)} J_\nu(z_l(\eta_\infty - \eta)) + K_2^{(l)} Y_{\nu}(z_l(\eta_\infty - \eta))\right).\]

As $Y_{lm}(\theta, \varphi) = d_{lm}P_l^m(cos\theta)e^{im\phi}, \ d_{lm} = (-1)^m\sqrt{\frac{2l+1}{4\pi}\frac{(l-m)!}{(l+m)!}},$ then, the general solution of \eqref{eq_main} is

\begin{equation}\label{fund_sol} \sum_{l=0}^\infty F_l(\eta) \sum_{m = -l}^l d_{lm}^{-1}Y_{lm}(\theta, \varphi),\end{equation} where

\begin{equation}\label{func_f}F_l(\eta) =
    \begin{cases}
      K_1^{(0)}(\eta_\infty - \eta)^{2\nu} + K_2^{(0)}, \ l =0,\\
      (\eta_\infty - \eta)^\nu\big(K_1^{(l)}J_\nu(z_l(\eta_\infty - \eta))+K_2^{(l)}Y_{\nu}(z_l(\eta_\infty - \eta))\big), \ l\in\mathbb{N}.
    \end{cases}\
\end{equation} Due to the spherical harmonic closure relations  \cite[1.17.25]{DLMF}, the initial conditions \eqref{cond} are equivalent to the following system of conditions

\begin{equation}
    \label{condit}
    \begin{cases}
     d_{lm}^{-1} F_l\big|_{\eta=0} = Y_{lm}^*(\textbf{\textit{0}}),\\
     F_l'\big|_{\eta=0} = 0,
    \end{cases}\
\end{equation} for all $l \in 0 \cup \mathbb{N}.$

Let us consider the case $l = 0.$ The above conditions become

\begin{equation*}
    \begin{cases}
     \big(K_1^{(0)}(\eta_\infty - \eta)^{2\nu} + K_2^{(0)}\big)\big|_{\eta=0} = d_{00}Y_{00}^*(\textbf{\textit{0}}),\\
     K_1^{(0)}2\nu(\eta_\infty - \eta)^{2\nu-1}\big|_{\eta=0} = 0.
    \end{cases}\
\end{equation*} One can see that $K_1^{(0)} = 0$ and $K_2^{(0)} = d_{00}Y_{00}^*(\textbf{\textit{0}}).$

For $l\in\mathbb{N}$ the conditions \eqref{condit} become

\begin{equation*}
    \begin{cases}
     (\eta_\infty - \eta)^{\nu}\big(K_1^{(l)} J_\nu(z_l(\eta_\infty - \eta)) + K_2^{(l)}Y_{\nu}(z_l(\eta_\infty - \eta))\big)\big|_{\eta=0} = d_{lm}Y_{lm}^*(\textbf{\textit{0}}),\\
     (\eta_\infty - \eta)^{\nu}\big( K_1^{(l)} J_\nu(z_l(\eta_\infty - \eta)) + K_2^{(l)}Y_{\nu}(z_l(\eta_\infty - \eta))\big) \big)'\big|_{\eta=0} = 0.
    \end{cases}\
\end{equation*} The last system is equivalent to

\begin{equation*}
    \begin{cases}
     A_1^{(l)}K_1^{(l)} + B_1^{(l)}K_2^{(l)}  = d_{lm}Y_{lm}^*(\textbf{\textit{0}}), \\
     A_2^{(l)}K_1^{(l)} + B_2^{(l)}K_2^{(l)}  = 0
    \end{cases}\
\end{equation*}where
\[ A_1^{(l)} = \eta_\infty^\nu J_\nu(z_l\eta_\infty), \ \ \ A_2^{(l)} =  - \nu \eta_\infty^{\nu-1}J_\nu(z_l\eta_\infty) - \frac{z_l}{2}\eta_\infty^\nu(J_{\nu-1}(z_l\eta_\infty) - J_{\nu+1}(z_l\eta_\infty)),\]
\[ B_1^{(l)} = \eta_\infty^\nu Y_{\nu}(z_l\eta_\infty), \ \ \  B_2^{(l)} =  - \nu \eta_\infty^{\nu-1}Y_{\nu}(z_l\eta_\infty) - \frac{z_l}{2}\eta_\infty^\nu(Y_{\nu-1}(z_l\eta_\infty) - Y_{\nu+1}(z_l\eta_\infty)).\]  Thus,

\[ K_1^{(l)} = \frac{B_2^{(l)}d_{lm}Y_{lm}^*(\textbf{\textit{0}})}{A_1^{(l)}B_2^{(l)}-A_2^{(l)}B_1^{(l)}}, \ \  K_2^{(l)} = - \frac{A_2^{(l)}d_{lm}Y_{lm}^*(\textbf{\textit{0}})}{A_1^{(l)}B_2^{(l)}-A_2^{(l)}B_1^{(l)}}.\] After straightforward algebraic manipulations, one can see that
\[ A_1^{(l)}B_2^{(l)}-A_2^{(l)}B_1^{(l)} = \frac{z_l}{2}\eta_\infty^{2\nu} \bigg( Y_{\nu}(z_l\eta_\infty) \big( J_{\nu-1}(z_l\eta_\infty) - J_{\nu+1}(z_l\eta_\infty)\big) \] \[- J_{\nu}(z_l\eta_\infty)\big(Y_{\nu-1}(z_l\eta_\infty) - Y_{\nu+1}(z_l\eta_\infty)\big) \bigg) \]
\[ = \frac{z_l}{2}\eta_\infty^{2\nu}\bigg( -\big(J_{\nu+1}(z_l\eta_\infty)Y_{\nu}(z_l\eta_\infty) - J_{\nu}(z_l\eta_\infty)Y_{\nu+1}(z_l\eta_\infty)\big)\] \[ - \big(J_{\nu}(z_l\eta_\infty)Y_{\nu-1}(z_l\eta_\infty) - J_{\nu-1}(z_l\eta_\infty)Y_{\nu}(z_l\eta_\infty) \big)\bigg).\] Using the Wronskian expression $W(J_\nu(x), Y_{\nu}(x)) = J_{\nu+1}(x)Y_{\nu}(x) - J_{\nu}(x)Y_{\nu+1}(x) = \frac{2}{\pi x}$ (see \cite[10.5]{DLMF}), one gets
\[ A_1^{(l)}B_2^{(l)}-A_2^{(l)}B_1^{(l)} = -\frac{z_l}{2} \eta_\infty^{2\nu} \frac{4}{\pi \eta_\infty z_l} =- \frac{2\eta_\infty^{2\nu-1}}{\pi}.\] From which it follows that

\[ K_1^{(l)} = \frac{\pi d_{lm} \bigg(\nu Y_{\nu}(z_l\eta_\infty)+\frac{z_l\eta_\infty}{2}(Y_{\nu-1}(z_l\eta_\infty)-Y_{\nu+1}(z_l\eta_\infty))\bigg)Y_{lm}^*(\textbf{\textit{0}})}{2 \eta_\infty^{\nu}}, \] \[ K_2^{(l)} = -\frac{\pi d_{lm} \bigg(\nu J_\nu(z_l\eta_\infty)+\frac{z_l\eta_\infty}{2}(J_{\nu-1}(z_l\eta_\infty)-J_{\nu+1}(z_l\eta_\infty))\bigg)Y_{lm}^*(\textbf{\textit{0}})}{2 \eta_\infty^{\nu} }.\] By subsequently applying the identities $Y_{\nu}'(x) = \frac{1}{2}(Y_{\nu-1}(x) - Y_{\nu+1}(x))$ and $xY_\nu'(x) = xY_{\nu-1}(x)-\nu Y_{\nu}(x)$, one gets $\nu Y_\nu(x) + \frac{x}{2}(Y_{\nu-1}(x)-Y_{\nu+1}(x)) = x Y_{\nu-1}(x).$ Thus,

\[ K_1^{(l)} = \frac{\pi d_{lm} z_l  Y_{\nu-1}(z_l\eta_\infty)Y_{lm}^*(\textbf{\textit{0}})}{2 \eta_\infty^{\nu-1}}, \] Analogous transformations for the functions $J_\nu(\cdot)$  lead to

\[ K_2^{(l)} = -\frac{\pi d_{lm} z_l  J_{\nu-1}(z_l\eta_\infty)Y_{lm}^*(\textbf{\textit{0}})}{2 \eta_\infty^{\nu-1}}.\] By putting the above constants into \eqref{func_f} and \eqref{fund_sol}, one finishes the proof of the theorem.\end{proof}

The following results derive some properties of the functions $F_l(\eta),$ which will be used later.

\begin{lemma}
\label{lemma1} For a fixed $\eta\in[0,\eta_\infty)$ and any constant $a>1$ the following asymptotic behaviour holds true
\begin{equation}\label{lemma1eq}Y_{a-1}(z_l\eta_\infty) J_{a}(z_l(\eta_\infty-\eta)) -  J_{a-1}(z_l\eta_\infty)Y_{a}(z_l(\eta_\infty-\eta)) = \frac{2\cos(z_l\eta)}{\pi z_l \sqrt{\eta_\infty(\eta_\infty-\eta)}} + O_\eta(z_l^{-2}), \end{equation} as $ l\to\infty,$ where the terms $O_\eta(z_l^{-2})$ may depend on $\eta.$

\end{lemma}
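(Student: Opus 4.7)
The plan is to expand both sides using the large-argument asymptotic expansions of the Bessel functions $J_{\nu}$ and $Y_{\nu}$. For fixed $\eta \in [0,\eta_\infty)$, both arguments $z_l\eta_\infty$ and $z_l(\eta_\infty-\eta)$ tend to infinity as $l \to \infty$, because $z_l = c\sqrt{l(l+1)}/r \to \infty$, so the standard asymptotic series from \cite{abramowitz1968handbook} apply and can be truncated after the leading term with a rigorous $O(x^{-1})$ remainder.

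Concretely, I would set $X := z_l\eta_\infty$ and $Y := z_l(\eta_\infty-\eta)$, so that $X - Y = z_l\eta$, and write
\[
J_{\nu}(s) = \sqrt{\tfrac{2}{\pi s}}\bigl(\cos\phi_\nu(s) + O(s^{-1})\bigr),\qquad
Y_{\nu}(s) = \sqrt{\tfrac{2}{\pi s}}\bigl(\sin\phi_\nu(s) + O(s^{-1})\bigr),
\]
where $\phi_\nu(s) = s - \nu\pi/2 - \pi/4$. Substituting these expansions into the left-hand side of \eqref{lemma1eq} with $(\nu,s) = (a-1,X)$ and $(a,Y)$ respectively produces the principal term
\[
\frac{2}{\pi\sqrt{XY}}\bigl(\sin\phi_{a-1}(X)\cos\phi_a(Y) - \cos\phi_{a-1}(X)\sin\phi_a(Y)\bigr)
= \frac{2}{\pi\sqrt{XY}}\,\sin\bigl(\phi_{a-1}(X) - \phi_a(Y)\bigr).
\]
A direct computation gives $\phi_{a-1}(X) - \phi_a(Y) = (X-Y) + \pi/2 = z_l\eta + \pi/2$, so $\sin(\phi_{a-1}(X)-\phi_a(Y)) = \cos(z_l\eta)$. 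Together with $\sqrt{XY} = z_l\sqrt{\eta_\infty(\eta_\infty-\eta)}$, this reproduces exactly the claimed leading term $\tfrac{2\cos(z_l\eta)}{\pi z_l\sqrt{\eta_\infty(\eta_\infty-\eta)}}$.

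For the remainder, I would combine the four bilinear cross-terms arising from the $O(s^{-1})$ corrections. Each cross-term has one factor $\sqrt{2/(\pi X)}\,O(X^{-1})$ or $\sqrt{2/(\pi Y)}\,O(Y^{-1})$ multiplied by a bounded trigonometric quantity and another factor of size $O(s^{-1/2})$. Since $X$ and $Y$ are both of order $z_l$ (with the ratio $X/Y = \eta_\infty/(\eta_\infty-\eta)$ bounded for fixed $\eta < \eta_\infty$), each such product is $O(z_l^{-2})$, with a constant depending on $\eta$ through $\eta_\infty - \eta$; this justifies writing the error as $O_\eta(z_l^{-2})$.

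The routine but only delicate part is the bookkeeping of the remainders: one must verify that the bounded trigonometric factors do not spoil the order of the cross-terms, and that the dependence on $a$ (which is fixed) enters only through constants. A minor nuisance is uniformity in $\eta$, but since $\eta$ is held fixed throughout, the assertion as stated only requires an $\eta$-dependent bound, which is automatic from the above estimates.
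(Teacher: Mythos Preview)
Your proposal is correct and follows essentially the same route as the paper: both substitute the large-argument asymptotics $J_\nu(s),Y_\nu(s)=\sqrt{2/(\pi s)}\bigl(\cos\phi_\nu(s)\text{ or }\sin\phi_\nu(s)\bigr)+O(s^{-3/2})$, simplify the resulting trigonometric combination to $\cos(z_l\eta)$, and bound the bilinear cross-terms by $O(z_l^{-2})$. The only cosmetic difference is that the paper first shifts $\sin\phi_{a-1}(X)$ to $\cos\phi_a(X)$ and applies the cosine subtraction formula, whereas you apply the sine subtraction formula directly to $\phi_{a-1}(X)-\phi_a(Y)=z_l\eta+\pi/2$; the computations are equivalent.
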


\begin{proof}
For the Bessel's function of the first kind $J_{a}(x)$  with $a\geq-\frac{1}{2}$ the following holds true uniformly in $x\geq0$

\[
  \left|J_a(x) - \left( \frac{2}{\pi x}  \right)^{\frac{1}{2}} \cos\left(x-\frac{1}{2}a \pi - \frac{1}{4}\pi\right)\right| \leq \frac{d_a}{x^{\frac{3}{2}}},
\]
 where $d_a$ is a constant depending on $a,$ see   \cite[Theorem 4.1]{olenko2006upper}.

An approximation for $Y_\nu(x)$ follows from the analogous estimates  and the relationship \cite[Section 3.6]{watson1944}
\[ Y_a(x) = \frac{H_a^{(1)}(x) - H_a^{(2)}(x)}{2i},\] where $H_a^{(1)}(x)$ and $H_a^{(2)}(x)$ are the Hankel functions of the first and second kind respectively. A straightforward modification of the proof of \cite[Theorem 4.1]{olenko2006upper} gives that, uniformly in $x\geq2,$  for the Bessel's function of the second kind $Y_{a}(x),$  $a\geq-{1}/{2},$ it holds true

\begin{equation}\label{eq:approx} \left|Y_a(x) - \left( \frac{2}{\pi x}  \right)^{\frac{1}{2}}\sin\left(x-\frac{1}{2}a \pi - \frac{1}{4}\pi\right)\right| \leq \frac{d_a}{x^{\frac{3}{2}}}.\end{equation}

Thus, left-hand side of \eqref{lemma1eq} is asymptotically equal to
\[ \frac{2}{\pi z_l \sqrt{\eta_\infty(\eta_\infty - \eta)}}\left( \sin\left(z_l \eta_\infty - \frac{(a-1)\pi}{2} - \frac{\pi}{4}\right)  \cos\left(z_l(\eta_\infty-\eta) - \frac{a\pi}{2} - \frac{\pi}{4}\right) \right.\] \[ \left.- \cos\left(z_l \eta_\infty - \frac{(a-1)\pi}{2} - \frac{\pi}{4}\right)\sin\left(z_l (\eta_\infty -\eta)- \frac{a\pi}{2} - \frac{\pi}{4}\right)\right)  + O_\eta(z_l^{-2})\]

\[ = \frac{2}{\pi z_l \sqrt{\eta_\infty(\eta_\infty - \eta)}}\left( \cos\left(z_l \eta_\infty - \frac{a\pi}{2} - \frac{\pi}{4}\right)  \cos\left(z_l(\eta_\infty-\eta) - \frac{a\pi}{2} - \frac{\pi}{4}\right) \right.\] \[ \left.+ \sin\left(z_l \eta_\infty - \frac{a\pi}{2} - \frac{\pi}{4}\right)\sin\left(z_l (\eta_\infty -\eta)- \frac{a\pi}{2} - \frac{\pi}{4}\right)\right)  + O_\eta(z_l^{-2})\]

\[ = \frac{2\cos(z_l\eta)}{\pi z_l \sqrt{\eta_\infty(\eta_\infty-\eta)}} + O_\eta(z_l^{-2}). \]

\end{proof}

\begin{lemma}\label{lemma2.1}
For a fixed $\eta\in[0,\eta_\infty)$ the following asymptotic behaviour holds true
\[ F_l(\eta) = \frac{\cos(z_l \eta)}{e^{\nu-1/2}(\eta)}  + O_\eta(z_l^{-1}), \ \ \ l\to\infty, \]
 where the terms $O_\eta(z_l^{-1})$ may depend on $\eta.$
\end{lemma}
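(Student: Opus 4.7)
The plan is to substitute the explicit expressions for $F_l(\eta)$, $K_1^{(l)}$, and $K_2^{(l)}$ from equation \eqref{eq:fl} directly into the definition of $F_l(\eta)$, and then invoke Lemma \ref{lemma1} with $a=\nu$ to replace the resulting combination of Bessel functions by its leading asymptotic.

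Concretely, I would first rewrite
\[
F_l(\eta)=(\eta_\infty-\eta)^{\nu}\cdot\frac{\pi z_l}{2\eta_\infty^{\nu-1}}\bigl(Y_{\nu-1}(z_l\eta_\infty)J_{\nu}(z_l(\eta_\infty-\eta))-J_{\nu-1}(z_l\eta_\infty)Y_{\nu}(z_l(\eta_\infty-\eta))\bigr),
\]
so that the bracketed expression is exactly the quantity estimated in Lemma~\ref{lemma1} with $a=\nu$ (noting that $\nu=\tfrac{c^2\eta_\infty}{2D}+1>1$, so the hypothesis of Lemma~\ref{lemma1} applies). Substituting the estimate from Lemma~\ref{lemma1} gives
\[
F_l(\eta)=(\eta_\infty-\eta)^{\nu}\cdot\frac{\pi z_l}{2\eta_\infty^{\nu-1}}\left(\frac{2\cos(z_l\eta)}{\pi z_l\sqrt{\eta_\infty(\eta_\infty-\eta)}}+O_\eta(z_l^{-2})\right).
\]

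Next I would simplify: the main term reduces to
\[
\frac{(\eta_\infty-\eta)^{\nu-1/2}}{\eta_\infty^{\nu-1/2}}\cos(z_l\eta)=\left(\frac{\eta_\infty-\eta}{\eta_\infty}\right)^{\nu-1/2}\cos(z_l\eta)=\frac{\cos(z_l\eta)}{e^{\nu-1/2}(\eta)},
\]
using the identity $e(\eta)=\eta_\infty/(\eta_\infty-\eta)$. The remainder term, after multiplication by the prefactor $(\eta_\infty-\eta)^{\nu}\pi z_l/(2\eta_\infty^{\nu-1})$, is $O_\eta(z_l^{-1})$, since the prefactor grows only linearly in $z_l$ while the error from Lemma~\ref{lemma1} decays like $z_l^{-2}$, and all $\eta$-dependent factors are bounded for $\eta$ fixed in $[0,\eta_\infty)$.

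I do not anticipate a serious obstacle here, since the result is essentially a direct algebraic consequence of Lemma~\ref{lemma1}. The only delicate point is the bookkeeping of the powers of $(\eta_\infty-\eta)$ and $\eta_\infty$ so that they combine into $e^{\nu-1/2}(\eta)$ and the verification that the $O_\eta(z_l^{-2})$ error from Lemma~\ref{lemma1}, once multiplied by the $z_l$-linear prefactor, indeed yields an $O_\eta(z_l^{-1})$ remainder uniformly in the chosen fixed $\eta$.
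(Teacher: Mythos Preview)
Your proposal is correct and follows essentially the same approach as the paper: both rewrite $F_l(\eta)$ in terms of the bracketed Bessel combination, invoke Lemma~\ref{lemma1} with $a=\nu$, and then simplify the prefactors using $e(\eta)=\eta_\infty/(\eta_\infty-\eta)$, with the $O_\eta(z_l^{-2})$ error absorbing the linear-in-$z_l$ prefactor to yield $O_\eta(z_l^{-1})$.
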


\begin{proof}
Let us consider separately the following expression

\[ K_1^{(l)}J_\nu(z_l(\eta_\infty - \eta))+K_2^{(l)}Y_{\nu}(z_l(\eta_\infty - \eta))  \] \[= \frac{\pi z_l  }{2 \eta_\infty^{\nu-1}}(Y_{\nu-1}(z_l\eta_\infty)J_\nu(z_l(\eta_\infty - \eta)) - J_{\nu-1}(z_l\eta_\infty) Y_{\nu}(z_l(\eta_\infty - \eta))).\] According to Lemma~\ref{lemma1} the above expression equals to

\[ \frac{\sqrt{e(\eta)}\cos(z_l\eta)}{\eta_\infty^\nu} + O_\eta(z_l^{-1}).\] Thus,
\[ F_l(\eta) = \frac{(\eta_\infty - \eta)^\nu\sqrt{e(\eta)}\cos(z_l\eta)}{\eta_\infty^\nu} + O_\eta(z_l^{-1}) = \left(\frac{\eta_\infty - \eta}{\eta_\infty}\right)^\nu \cos(z_l \eta) \sqrt{e(\eta)} + O_\eta(z_l^{-1})   \] \[ =\frac{\cos(z_l \eta)}{e^{\nu-1/2}(\eta)} + O_\eta(z_l^{-1}).\]

\end{proof}

\begin{lemma}\label{lemma2}
The functions $F_l(\eta)$ are uniformly bounded on $\eta\in[0,\eta_\infty)$ and $\ l=0,1,...\ .$
\end{lemma}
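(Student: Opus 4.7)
The plan is to handle $l=0$ trivially (since $F_0\equiv 1$ by \eqref{eq:fl}) and then control $F_l$ for $l\ge 1$ by partitioning on the size of the Bessel argument $x:=z_l(\eta_\infty-\eta)$. For any \emph{fixed} $l\ge 1$, the small-argument expansions $J_\nu(y)\sim(y/2)^\nu/\Gamma(\nu+1)$ and $Y_\nu(y)\sim -\Gamma(\nu)(2/y)^\nu/\pi$ together with the prefactor $(\eta_\infty-\eta)^\nu$ show that the $J_\nu$ contribution vanishes at $\eta=\eta_\infty$ while $(\eta_\infty-\eta)^\nu Y_\nu(z_l(\eta_\infty-\eta))\to -\Gamma(\nu)2^\nu/(\pi z_l^\nu)$, so $F_l$ extends continuously to $[0,\eta_\infty]$ and is therefore bounded on $[0,\eta_\infty)$. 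Hence it suffices to bound $F_l$ uniformly for $l\ge L_0$, where $L_0$ is chosen large enough that $z_l\eta_\infty$ exceeds whichever threshold is needed in the Bessel asymptotics; the finitely many indices $l<L_0$ only contribute a finite constant.

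For $l\ge L_0$, I fix a constant $M>0$ and treat two subcases. When $x\ge M$, I substitute the asymptotic expansions of $J_\nu$ and $Y_\nu$ from \cite[Thm.~4.1]{olenko2006upper} and \eqref{eq:approx}, each with explicit $O(x^{-3/2})$ remainder, into
\[
F_l(\eta)=\frac{\pi z_l(\eta_\infty-\eta)^\nu}{2\eta_\infty^{\nu-1}}\bigl[Y_{\nu-1}(z_l\eta_\infty)J_\nu(x)-J_{\nu-1}(z_l\eta_\infty)Y_\nu(x)\bigr],
\]
following the computation in the proof of Lemma~\ref{lemma2.1}. The leading term simplifies to $\bigl((\eta_\infty-\eta)/\eta_\infty\bigr)^{\nu-1/2}\cos(z_l\eta)$, which is bounded by $1$ since $\nu-1/2>0$ and $\eta_\infty-\eta\le\eta_\infty$. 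The error terms are products of Bessel remainders of order $d_\nu/(z_l\eta_\infty)^{3/2}$ or $d_\nu/x^{3/2}\le d_\nu/M^{3/2}$ with the leading-order $(z_l\eta_\infty)^{-1/2}$ or $x^{-1/2}$ factors; multiplying by the prefactor, whose size is $z_l(\eta_\infty-\eta)^\nu\le z_l\eta_\infty^\nu$, and using $\nu\ge 1$, a routine computation bounds each such term by a constant depending only on $M$, $\nu$ and $\eta_\infty$.

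In the complementary subcase $x<M$, which forces $\eta_\infty-\eta<M/z_l$, I invoke the uniform bounds $|J_\nu(y)|\le C_J$ and $|y^\nu Y_\nu(y)|\le C_Y$ on $[0,M]$ (the latter being valid because the $y^{-\nu}$ singularity of $Y_\nu$ at the origin is absorbed by $y^\nu$), combined with $|J_{\nu-1}(z_l\eta_\infty)|,|Y_{\nu-1}(z_l\eta_\infty)|\le C/\sqrt{z_l\eta_\infty}$ from the same large-argument asymptotics. Rewriting $(\eta_\infty-\eta)^\nu Y_\nu(x)=z_l^{-\nu}x^\nu Y_\nu(x)$ for the second bracket term and bounding $(\eta_\infty-\eta)^\nu\le(M/z_l)^\nu$ for the first, the two contributions to $F_l(\eta)$ are each majorised by $Cz_l^{1/2-\nu}$, which is uniformly bounded since $\nu\ge 1$.

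The main obstacle is that Lemma~\ref{lemma2.1} cannot be used directly: its remainder term $O_\eta(z_l^{-1})$ has an $\eta$-dependent hidden constant that diverges as $\eta\to\eta_\infty$ for fixed $l$, because the underlying Bessel asymptotics break down when $x\downarrow 0$. The case split above circumvents this by applying small-argument bounds exactly in the regime where the large-argument expansion is not informative; the delicate step is verifying that the constants produced in the two regimes remain controlled and match at the transition $x=M$, which is what determines the assumption $\nu\ge 1$ as the natural parameter range for the argument.
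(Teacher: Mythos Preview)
Your approach is correct in outline but takes a longer route than the paper. You split according to whether $x=z_l(\eta_\infty-\eta)$ is large or small and insert the full large-argument expansions with explicit remainders in the first regime. The paper instead bounds the two summands in \eqref{eq:fl} directly and globally: it writes the first summand as $\bigl[z_l^{1/2}Y_{\nu-1}(z_l\eta_\infty)\bigr]\cdot\bigl[(z_l(\eta_\infty-\eta))^{1/2}J_\nu(z_l(\eta_\infty-\eta))\bigr]\cdot(\eta_\infty-\eta)^{\nu-1/2}$ and invokes the uniform bound $|\sqrt{x}J_\nu(x)|\le C$ on all of $[0,\infty)$ from \cite{olenko2006upper} together with the boundedness of $\sqrt{x}Y_{\nu-1}(x)$ on $[z_1\eta_\infty,\infty)$; the second summand is handled by the same $\sqrt{x}J_{\nu-1}(x)$ bound for the $z_l\eta_\infty$ factor, and by combining $Y_\nu(x)\sim Cx^{-\nu}$ near $0$ with \eqref{eq:approx} away from $0$ for the remaining factor $z_l^{1/2}(\eta_\infty-\eta)^\nu Y_\nu(z_l(\eta_\infty-\eta))$. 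No case split on $x$, no finite-$l$ buffer $L_0$, and no tracking of cross-error terms are needed.

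One caution in your $x\ge M$ regime: the pair of crude bounds you quote, $z_l(\eta_\infty-\eta)^\nu\le z_l\eta_\infty^\nu$ for the prefactor and $x^{-3/2}\le M^{-3/2}$ for the remainder, does \emph{not} by itself give uniformity in $l$; the cross term $(\text{prefactor})\cdot(z_l\eta_\infty)^{-1/2}\cdot x^{-3/2}$ then retains a factor $z_l^{1/2}$. The ``routine computation'' actually requires keeping $(\eta_\infty-\eta)^\nu$ and cancelling it against $x^{-3/2}=z_l^{-3/2}(\eta_\infty-\eta)^{-3/2}$, invoking $\eta_\infty-\eta\ge M/z_l$ only when $\nu<3/2$ to control the residual $(\eta_\infty-\eta)^{\nu-3/2}$. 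This bookkeeping is exactly what the paper's global $\sqrt{x}J_\nu(x)$ bound sidesteps.
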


\begin{proof}
Note that $\nu>1.$ First, let us consider the absolute value of the first summand in \eqref{eq:fl}
\[ |z_l(\eta_\infty-\eta)^\nu Y_{\nu-1}(z_l\eta_\infty) J_\nu(z_l(\eta_\infty-\eta))|.\] It is bounded by some constant $C$ for all values of $\eta\in[0,\eta_\infty)$ and $l.$ Indeed, $|(z_l(\eta_\infty-\eta))^\frac{1}{2} J_\nu(z_l(\eta_\infty-\eta))|<C,$ as $|\sqrt{x}J_\nu(x)|,  x\in[0,\infty),$ is a bounded function \cite{olenko2006upper}. The terms $|z_l^\frac{1}{2}Y_{\nu-1}(z_l\eta_\infty)|$ are uniformly bounded for all $l$ as the function $\sqrt{x} Y_\nu(x)$ is bounded on $[C,\infty]$ for any fixed $C>0.$ The later boundedness follows from the continuity of $\sqrt{x} Y_\nu(x)$ on $[C,\infty]$ and the inequality~\eqref{eq:approx}.

The absolute value of the second summand in \eqref{eq:fl} can be estimated as
\begin{equation}\label{eq_uni} |z_l  (\eta_\infty-\eta)^\nu J_{\nu-1}(z_l\eta_\infty)Y_\nu(z_l(\eta_\infty-\eta))|\leq  C |z_l^{\frac{1}{2}}(\eta_\infty-\eta)^\nu Y_\nu(z_l(\eta_\infty-\eta))|,
\end{equation} which is obtained by using the boundedness of the function $|\sqrt{x}J_\nu(x)|, \ x\in[0,\infty).$  The boundedness of the function on the right-hand side of  \eqref{eq_uni} follows from the asymptotic behaviour $Y_\nu(x)\sim\frac{C}{x^\nu},$ $x\to0,$ see \cite[9.1.11]{abramowitz1968handbook},  \eqref{eq:approx} and the continuity of the function $Y_\nu(x)$ on the interval $x\in[C,\infty).$ \end{proof}

\section{Solution for stochastic spherical diffusion equation}\label{sect_rand}

In this section we consider the case of equations from Section~\ref{sec3} with the initial conditions determined by an isotropic Gaussian random field.

Namely, we consider the following initial condition and its spherical harmonics expansion
\begin{equation} \label{cond_rand1}
{u}(\eta, \theta, \varphi)|_{\eta = 0} = T(\theta,\varphi) = \sum_{l=0}^{\infty}\sum_{m=-l}^la_{lm}Y_{lm}(\theta,\varphi),
\end{equation}
\begin{equation} \label{cond_rand2}
\frac{\partial {u}(\eta, \theta, \varphi)}{\partial \eta}{\biggl |}_{\eta = 0} =0,
\end{equation} where $a_{lm}, \ m=-l,...,l,\ l\geq0,$ are Gaussian random variables. Without lost of generality, we assume that the random field $T(\theta, \varphi)$ is centered $ET(\theta,\varphi) = 0.$

\begin{lemma}
\label{lemma_cont}
If the angular power spectrum $\{C_l, l=0,1,...\}$ of the Gaussian isotropic random field $T(\theta, \varphi)$ satisfies the condition

\begin{equation}\label{cond_spec}
\sum_{l=1}^\infty  C_l  l^{10}(2l+1) < +\infty,
\end{equation} then $T(\theta,\varphi)\in C^2(\mathbb{S}^2)$ a.s.
\end{lemma}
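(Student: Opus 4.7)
The plan is to establish almost-sure uniform convergence on $\mathbb{S}^2$ of the random series \eqref{cond_rand1} together with all its partial derivatives up to order two, so that term-by-term differentiation is justified. Write $T = \sum_{l\geq 0} T_l$, where $T_l(\theta,\varphi) = \sum_{m=-l}^l a_{lm} Y_{lm}(\theta,\varphi)$. It suffices to verify, for every coordinate differential monomial $\partial^\alpha$ in $(\theta,\varphi)$ with $|\alpha| \le 2$, that $\sum_{l=0}^\infty \sup_{\theta,\varphi} |\partial^\alpha T_l(\theta,\varphi)|$ is finite almost surely; the Weierstrass M-test will then yield uniform a.s. convergence and hence $T \in C^2(\mathbb{S}^2)$ a.s.

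The first step is a pointwise Cauchy--Schwarz estimate,
\[ \sup_{\theta,\varphi} |\partial^\alpha T_l(\theta,\varphi)|^2 \leq \Big(\sum_{m=-l}^l |a_{lm}|^2\Big) \cdot \sup_{\theta,\varphi} \sum_{m=-l}^l |\partial^\alpha Y_{lm}(\theta,\varphi)|^2. \]
The first factor is random with $E\sum_m |a_{lm}|^2 = (2l+1) C_l$; the second is a deterministic quantity that I plan to bound by $C\, l^{2|\alpha|+2}$, using standard estimates on spherical harmonics obtained by differentiating the addition formula \eqref{add_form} and invoking the inequality $|P_l^{(k)}(x)| \le C_k l^{2k}$ on $[-1,1]$.

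The second step combines this bound with Jensen's inequality $E[X^{1/2}] \le (E[X])^{1/2}$ to obtain
\[ E \sup_{\theta,\varphi} |\partial^\alpha T_l(\theta,\varphi)| \leq C\, l^{|\alpha|+1} \sqrt{(2l+1) C_l}. \]
Applying Cauchy--Schwarz in $l$ with the splitting $l^{|\alpha|+1} = l^{-1} \cdot l^{|\alpha|+2}$ gives
\[ \sum_{l=1}^\infty l^{|\alpha|+1}\sqrt{(2l+1)C_l} \leq \Big(\sum_{l\geq 1} l^{-2}\Big)^{1/2}\Big(\sum_{l\geq 1} l^{2|\alpha|+4}(2l+1) C_l\Big)^{1/2}. \]
For $|\alpha| \le 2$ the exponent $2|\alpha|+4 \le 8$ is dominated by the exponent $10$ in the hypothesis \eqref{cond_spec}, so the right-hand side is finite. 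By Fubini, $\sum_l \sup_{\theta,\varphi} |\partial^\alpha T_l|$ is finite almost surely, completing the argument.

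The main obstacle is establishing the pointwise bound $\sup_{\theta,\varphi} \sum_m |\partial^\alpha Y_{lm}(\theta,\varphi)|^2 \le C\, l^{2|\alpha|+2}$: for $|\alpha|=0$ this follows immediately from \eqref{add_form} at $\Theta = 0$, but higher-order derivatives require differentiating the addition formula and estimating derivatives of Legendre polynomials, with additional care near the poles $\theta \in \{0,\pi\}$ where the spherical coordinate frame degenerates. An alternative route that avoids these technicalities is Sobolev embedding on the sphere, $H^s(\mathbb{S}^2) \hookrightarrow C^2(\mathbb{S}^2)$ for $s > 3$: one checks directly that $E\|T\|_{H^s}^2 = \sum_l (1+l(l+1))^s (2l+1) C_l$ is finite for any $s \in (3,5]$ under \eqref{cond_spec}, so that $T \in H^s \subset C^2(\mathbb{S}^2)$ almost surely.
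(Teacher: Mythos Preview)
Your proposal is correct, and both routes you sketch are sound; the Sobolev embedding argument in particular is clean and complete as written. However, your approach is genuinely different from the paper's. The paper's proof consists of two lines: it applies the addition formula \eqref{add_form} to write the covariance as $(4\pi)^{-1}\sum_l C_l(2l+1)P_l(\cos\Theta)$, and then simply invokes Lemma~3.3 of Cheng and Xiao (\cite{cheng2016excursion}), which gives exactly the spectral condition \eqref{cond_spec} for $C^2$ sample paths. In other words, the paper outsources the entire analytic content to a black-box citation.

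Your Weierstrass/Cauchy--Schwarz route is more elementary and self-contained, and in fact shows that the weaker hypothesis $\sum_l C_l l^8(2l+1)<\infty$ already suffices, so the exponent $10$ is not sharp for $C^2$. You are right to flag the pole issue: the bound $\sup_{\theta,\varphi}\sum_m|\partial^\alpha Y_{lm}|^2\le C l^{2|\alpha|+2}$ is delicate in the $(\theta,\varphi)$ chart because $\partial_\varphi Y_{lm}$ and its higher $\varphi$-derivatives do not stay bounded in the claimed way as $\theta\to 0,\pi$ (the coordinate vector fields blow up there). The clean fix is exactly the one you suggest at the end---either work with rotation-invariant derivatives (covariant derivatives or the Laplace--Beltrami operator, which commutes with the harmonic projections) or bypass the issue entirely via the Sobolev embedding $H^s(\mathbb{S}^2)\hookrightarrow C^2(\mathbb{S}^2)$ for $s>3$, using $E\|T\|_{H^s}^2=\sum_l(1+l(l+1))^s(2l+1)C_l<\infty$ for $s\in(3,5]$. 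The latter is the tidiest standalone proof.
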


\begin{proof} It follows from (\ref{add_form}) that
\[Cov(T(\theta, \varphi), T(\theta', \varphi')) = \sum_{l=0}^\infty C_l\sum_{m=-l}^lY_{lm}(\theta, \varphi) Y^*_{lm}(\theta', \varphi')\]
\[ = (4\pi)^{-1} \sum_{l=0}^\infty   C_l  (2l+1)P_l(\cos \Theta) .\]  Therefore, the statement of the lemma directly follows from {\rm\cite[ Lemma 3.3]{cheng2016excursion}}.\end{proof}

Lemma~\ref{lemma_cont} provides the sufficient conditions for the random field $T(\theta,\varphi)$ to be a.s. twice continuously differentiable with respect to $\theta,\varphi.$ In the following results we assume that $T(\theta,\varphi)$ is a.s. twice continuously differentiable or that the conditions of Lemma~\ref{lemma_cont} hold true.

\begin{lemma}\label{lemma:const}
Let  $K_T(\Theta)$ be a covariance function of  an isotropic a.s. continuous Gaussian random field $T(\theta,\varphi).$ There exists $\Theta'>0$ such $K_T(\Theta)=K_T(0),$ for all $\Theta \leq \Theta',$ if and only if $T(\theta,\varphi)=\xi, \ \xi\in L^2(\Omega),$ for all $\theta$ and $\varphi,$ with probability 1.
\end{lemma}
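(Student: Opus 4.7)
The plan is to handle the two directions separately. The ``if'' direction is immediate: if $T(\theta,\varphi)=\xi$ with probability $1$ for all $(\theta,\varphi)$, then for any pair of points on $\mathbb{S}^2$ we have $K_T(\Theta)=E(\xi^2)-(E\xi)^2=\mathrm{Var}(\xi)$, which is constant in $\Theta$ and in particular equals $K_T(0)$ on any interval $[0,\Theta']$.

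For the ``only if'' direction, the strategy is to combine the Legendre expansion of the covariance function (already written down in the proof of Lemma~\ref{lemma_cont}) with the characterization of constant spherical random fields given in Section~\ref{sec2} just after \eqref{eq:alm}. Specifically, I would start from
\[
K_T(0)-K_T(\Theta)=(4\pi)^{-1}\sum_{l=0}^\infty C_l(2l+1)\bigl(1-P_l(\cos\Theta)\bigr).
\]
Because $|P_l(x)|\le 1$ on $[-1,1]$ and each $C_l\ge 0$, every term in the series is non-negative. Hence, if the left-hand side vanishes for every $\Theta\in[0,\Theta']$, each summand must vanish pointwise on that interval.

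The key step is the strict inequality $P_l(x)<1$ for $x\in[-1,1)$ and $l\ge 1$, which is a standard property of Legendre polynomials (it follows, for example, from Rodrigues' formula together with the uniform bound $|P_l(x)|\le 1$). This strict positivity of $1-P_l(\cos\Theta)$ on $(0,\Theta']$ forces $C_l=0$ for every $l\ge 1$. Since $C_l=E|a_{lm}|^2$ for $|m|\le l$, it follows that $a_{lm}=0$ a.s. for all $l\ge 1$ and all admissible $m$.

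Finally, invoking the equivalence stated after \eqref{eq:alm}---which says that $T(\theta,\varphi)$ is a.s.\ a constant $\xi\in L^2(\Omega)$ if and only if $a_{lm}=0$ a.s.\ for $l\ge 1$---concludes the argument. I expect the only mildly delicate point to be citing (or briefly justifying) the strict bound $P_l(x)<1$ for $x<1$ and $l\ge1$; everything else is essentially bookkeeping, since the series converges absolutely by the standing assumption \eqref{eq:ang_conv} and all its terms are non-negative, so term-by-term vanishing on $[0,\Theta']$ is automatic.
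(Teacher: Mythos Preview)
Your argument is correct but follows a genuinely different route from the paper's. The paper proceeds directly from the variance of increments: for any two points $(\theta,\varphi)$ and $(\theta',\varphi')$ at angular distance $\Theta\le\Theta'$ one has $E\bigl(T(\theta,\varphi)-T(\theta',\varphi')\bigr)^2=2\bigl(K_T(0)-K_T(\Theta)\bigr)=0$, so the two values coincide almost surely; since any pair of points on $\mathbb{S}^2$ can be joined by a finite chain of steps of length at most $\Theta'$, and the field is a.s.\ continuous, $T$ is a.s.\ constant. Your spectral approach instead shows that the hypothesis forces $C_l=0$ for all $l\ge1$ via the Legendre expansion, and then invokes the characterization of constant fields through the vanishing of the coefficients $a_{lm}$. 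The paper's route is shorter and needs no properties of Legendre polynomials, while yours delivers the additional structural information that the angular power spectrum is concentrated at $l=0$. One small correction: the strict inequality $P_l(x)<1$ for $x\in[-1,1)$ fails at $x=-1$ when $l$ is even, but what your proof actually uses---that $P_l(\cos\Theta)$ is not identically $1$ on $(0,\Theta']$ for $l\ge1$---is immediate because $P_l$ is a non-constant polynomial, so the conclusion is unaffected.
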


\begin{proof}
As the sufficiency is straightforward, let us proceed with the necessity.

Note that for any two spherical points $(\theta,\varphi)$ and $(\theta',\varphi')$ at the angular distance $\Theta,$ it holds \[E(T(\theta,\varphi)- T(\theta',\varphi'))^2=2(K_T(0)-K_T(\Theta)) =0,\]
which implies that $T(\theta,\varphi) = T(\theta',\varphi')$ with probability 1. Due to the isotropy of the random field $T(\theta,\varphi)$ it is also true for any spherical points. \end{proof}

\begin{theorem}
\label{th_stoch}
The solution $u(\eta, \theta, \varphi)$ of the initial random value problem \eqref{eq_main}, \eqref{cond_rand1}, \eqref{cond_rand2} is given by the following random series
\begin{equation}
\label{sol_rand}
u(\eta,\theta,\varphi) = \sum_{l=0}^{\infty} F_l(\eta) \sum_{m=-l}^l  a_{lm}Y_{lm}(\theta, \varphi).
\end{equation} The covariance function of $u(\eta, \theta, \varphi)$ is given by

\begin{equation}\label{correl}
Cov(u(\eta, \theta, \varphi), u(\eta', \theta', \varphi')) =(4\pi)^{-1}\sum_{l=0}^\infty   C_l  (2l+1) F_l(\eta)F_l(\eta')P_l(\cos \Theta) ,\end{equation} where $\eta,\eta'\in[0,\eta_\infty),\ \theta,\theta'\in[0,\pi], \ \varphi,\varphi'\in[0,2\pi),$ $P_l(\cdot)$ is the $l$th Legendre polynomial, and $\Theta$ is the angular distance between the points $(\theta,\varphi)$ and $(\theta',\varphi').$

\begin{remark}
The convergence of the series in \eqref{sol_rand} is understood in the $L_2(\Omega\times \mathbb{S}^2)$  sense, that  is
\[ \lim\limits_{L\to\infty}E\left( \int\limits_{0}^{\pi}\int\limits_{0}^{2\pi} \left( u(\eta,\theta,\varphi) - \sum_{l=0}^{L} F_l(\eta) \sum_{m=-l}^l  a_{lm}Y_{lm}(\theta, \varphi)\right)^2 \sin\theta d\varphi d\theta \right) =0.\] However, the series in \eqref{sol_rand} also converges almost surely, see {\rm \cite[Section 2]{lang2015isotropic}}.
\end{remark}

\begin{proof}

The solution of the initial value problem \eqref{eq_main}, \eqref{cond_rand1}, \eqref{cond_rand2} is a spherical convolution of the function $\widetilde{u}(\cdot)$ obtained in Theorem~\ref{th1} and the random field $T(\theta,\varphi),$ provided that the corresponding Laplace series converges in the Hilbert space $L_2(\Omega\times \mathbb{S}^2).$

Let the two functions $f_1(\cdot)$ and $f_2(\cdot)$ on the sphere $\mathbb{S}^2$ belong to the space $L_2(\mathbb{S}^2)$ and have the Fourier-Laplace coefficients
\[ a^{(i)}_{lm} = \int_{\mathbb{S}^2}f_i(\theta,\varphi)Y_{lm}^*(\theta, \varphi) \sin\theta d\theta d\varphi, \ \ i=1,2. \] The non-commutative spherical convolution of $f_1(\cdot)$ and $f_2(\cdot)$ is defined as the Laplace series (see \cite{dunkel2009relativistic})
\begin{equation}\label{conv}
 [f_1*f_2](\theta, \varphi) = \sum\limits_{l=0}^\infty \sum\limits_{m=-l}^l a_{lm}^*Y_{lm}(\theta, \varphi)
\end{equation} with the Fourier-Laplace coefficients given by
\[ a_{lm} = \sqrt{\frac{4\pi}{2l+1}} a_{lm}^{(1)}a_{l0}^{(2)}, \] provided that the series in \eqref{conv} converges in the corresponding Hilbert space.

Thus, the random solution $u(\eta, \theta, \varphi)$ of equation  \eqref{eq_main} with the initial conditions~\eqref{cond_rand1} and~\eqref{cond_rand2} can be written as a spherical random field with the following Laplace series representation

\[ u(\eta, \theta, \varphi) = [T*u](\theta,\varphi) = \sum_{l=0}^{\infty}  F_l(\eta) \sum_{m=-l}^l  \sqrt{\frac{4\pi}{2l+1}} a_{lm} Y_{l0}^*(\textbf{\textit{0}}) Y_{lm}(\theta, \varphi) \]
\[= \sum_{l=0}^{\infty} F_l(\eta) \sum_{m=-l}^l  a_{lm}Y_{lm}(\theta, \varphi).\]
In the above series, the identity $Y_{l0}^*(\textbf{\textit{0}}) = \sqrt{\frac{2l+1}{4\pi}}$ was applied to simplify the expression.  By  Lemma~\ref{lemma2} and condition \eqref{cond_spec} the above series converges in $L_2(\Omega\times\mathbb{S}^2).$

By applying the addition formula (\ref{add_form}) for the spherical harmonics, one obtains the covariance function of $u(\eta,\theta,\varphi)$ in the form
\[Cov(u(\eta, \theta, \varphi), u(\eta', \theta', \varphi')) = \sum_{l=0}^\infty C_l F_l(\eta)F_l(\eta')\sum_{m=-l}^lY_{lm}(\theta, \varphi) Y^*_{lm}(\theta', \varphi')\]
\[ = (4\pi)^{-1} \sum_{l=0}^\infty   C_l  (2l+1) F_l(\eta) F_l(\eta')P_l(\cos \Theta) .\] As $|P_l(\cos(\Theta))|\leq1,$ as $l\to\infty,$ it follows from \eqref{eq:ang_conv} and Lemma~\ref{lemma2} that the above series is convergent.
\end{proof}

\end{theorem}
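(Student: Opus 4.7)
The plan is to exploit the fact that the deterministic Cauchy problem solved in Theorem~\ref{th1} furnishes a Green-type fundamental solution on the sphere, so that the random solution is realized as a (non-commutative) spherical convolution of $\widetilde{u}(\eta,\cdot)$ with the random initial field $T(\theta,\varphi)$. Concretely, I would first recall the definition of spherical convolution given in \eqref{conv}: for $f_1,f_2\in L^2(\mathbb{S}^2)$ with Fourier-Laplace coefficients $a_{lm}^{(1)}$ and $a_{lm}^{(2)}$, the coefficients of $f_1\ast f_2$ are $\sqrt{4\pi/(2l+1)}\,a_{lm}^{(1)}a_{l0}^{(2)}$. Applying this to $f_1=T$ (with random coefficients $a_{lm}$) and $f_2=\widetilde{u}(\eta,\cdot)$ (which by Theorem~\ref{th1} has coefficients $F_l(\eta)Y_{lm}^{\ast}(\boldsymbol{0})$), and simplifying through the identity $Y_{l0}^{\ast}(\boldsymbol{0})=\sqrt{(2l+1)/(4\pi)}$, produces exactly the series in \eqref{sol_rand}.

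Next I would verify that this series is a legitimate solution. Because each $\eta$-dependent factor $F_l(\eta)$ is the same function that appeared in Theorem~\ref{th1}, the equation \eqref{eq_main} is satisfied termwise. The initial conditions follow from the corresponding relations established in the proof of Theorem~\ref{th1}: evaluating at $\eta=0$ recovers \eqref{cond_rand1} via the closure identity $\sum_m Y_{lm}^{\ast}(\boldsymbol{0}) Y_{lm}(\theta,\varphi)\cdot\text{(coefficients)}$ collapsing to $T$, and $F_l'(0)=0$ yields \eqref{cond_rand2}. For convergence of the random series in $L^2(\Omega\times\mathbb{S}^2)$, I would invoke the uniform boundedness of $\{F_l(\eta)\}_{l\ge 0}$ established in Lemma~\ref{lemma2}, combined with $E|a_{lm}|^2 = C_l$ and the assumed spectral summability \eqref{eq:ang_conv}, so that
\[ \sum_{l=0}^{\infty} F_l^2(\eta) \sum_{m=-l}^{l} E|a_{lm}|^2 \;\le\; \sup_{l,\eta} F_l^2(\eta) \sum_{l=0}^{\infty}(2l+1)C_l <+\infty. \]

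For the covariance identity \eqref{correl}, I would take two points $(\eta,\theta,\varphi)$ and $(\eta',\theta',\varphi')$, form the double series for $E[u(\eta,\theta,\varphi) u(\eta',\theta',\varphi')]$, and use the Gaussian orthogonality $E[a_{lm}a_{l'm'}^{\ast}]=\delta_{ll'}\delta_{mm'}C_l$ to collapse the sum to a single index $l$. Applying the addition formula \eqref{add_form} then converts $\sum_m Y_{lm}(\theta,\varphi)Y_{lm}^{\ast}(\theta',\varphi')$ into $(2l+1)P_l(\cos\Theta)/(4\pi)$, yielding the stated expression. Convergence of this covariance series follows again from Lemma~\ref{lemma2}, the bound $|P_l(\cos\Theta)|\le 1$, and condition~\eqref{eq:ang_conv}.

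The main obstacle, as I see it, is less the computation than the bookkeeping: one must be careful that the ad hoc identification of $\widetilde{u}$ as the kernel of an $L^2$-bounded convolution operator is justified (the deterministic solution is distributional at $\eta=0$, so the convolution identity must be read spectrally through its Fourier-Laplace coefficients rather than pointwise), and that the termwise verification of \eqref{eq_main} is compatible with the mode of convergence. Both issues are handled cleanly by working entirely at the level of the Laplace coefficients and appealing to Lemma~\ref{lemma2} together with the spectral condition~\eqref{cond_spec} to legitimize the interchange of summation with expectation and with differentiation in $\eta$.
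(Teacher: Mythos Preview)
Your proposal is correct and follows essentially the same route as the paper: realizing the random solution as the spherical convolution $T\ast\widetilde{u}$, simplifying the resulting Laplace coefficients via $Y_{l0}^{\ast}(\boldsymbol{0})=\sqrt{(2l+1)/(4\pi)}$, invoking Lemma~\ref{lemma2} together with the spectral summability condition for $L_2(\Omega\times\mathbb{S}^2)$-convergence, and then deriving the covariance by orthogonality of the $a_{lm}$ plus the addition formula~\eqref{add_form}. The only cosmetic difference is that the paper cites condition~\eqref{cond_spec} rather than~\eqref{eq:ang_conv} for the convergence step, but your choice is in fact the sharper one needed here.
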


\section{Properties of stochastic solutions and their approximations}
\label{sec:approx}
This section investigates time-smoothness properties of the solution and truncation errors of its approximation.

For $L\in \mathbb{N},$ the following truncated series are used to approximate the solution of the random initial value problem in Theorem~\ref{th_stoch}

\begin{equation}
\label{approx}
u_L(\eta, \theta, \varphi) = \sum_{l=0}^{L} F_l(\eta) \sum_{m=-l}^l  a_{lm}Y_{lm}(\theta, \varphi).
\end{equation}

The next result provides the upper bounds for the corresponding approximation error.

\begin{theorem}
\label{approx_th}

Let $u(\eta, \theta, \varphi)$ be the solution of the initial value problem \eqref{eq_main}, \eqref{cond_rand1}, \eqref{cond_rand2} and $u_L(\eta, \theta, \varphi)$ be its approximation. Then,  for $\eta\in[0,\eta_\infty)$ the following bound for the truncation error holds true
\[ \left|\left| u(\eta, \theta, \varphi) - u_L(\eta, \theta, \varphi)\right|\right|_{L_2(\Omega \times \mathbb{S}^2)} \leq C \left(\sum_{l=L+1}^{\infty} C_l (2l+1) \right)^{1/2}, \] where the constant $C$ does not depend on $\eta.$  \end{theorem}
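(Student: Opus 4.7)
The plan is to compute the squared $L_2(\Omega\times\mathbb{S}^2)$ norm of the truncation error directly, exploiting the orthonormality of the spherical harmonics and the independence structure of the random coefficients $a_{lm}$, and then invoke Lemma~\ref{lemma2} to obtain a uniform bound in $\eta$.

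First, I would write the difference as a tail of the Laplace series,
\[
u(\eta,\theta,\varphi) - u_L(\eta,\theta,\varphi) = \sum_{l=L+1}^{\infty} F_l(\eta)\sum_{m=-l}^{l} a_{lm}Y_{lm}(\theta,\varphi),
\]
where the series converges in $L_2(\Omega\times\mathbb{S}^2)$ by the same argument used in the proof of Theorem~\ref{th_stoch}. Then I would expand the squared norm and interchange the expectation and the spherical integral (justified by Fubini, since everything is nonnegative or absolutely integrable via the convergence just noted). Using the orthonormality relation $\langle Y_{lm},Y_{l'm'}\rangle_{L^2(\mathbb{S}^2)}=\delta_{ll'}\delta_{mm'}$ and the covariance structure $E\,a_{lm}a^*_{l'm'}=\delta_{ll'}\delta_{mm'}C_l$, the double sum collapses to
\[
\bigl\|u-u_L\bigr\|_{L_2(\Omega\times\mathbb{S}^2)}^2 = \sum_{l=L+1}^{\infty} F_l^2(\eta)\sum_{m=-l}^{l}C_l = \sum_{l=L+1}^{\infty} F_l^2(\eta)(2l+1)C_l.
\]

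Next, I would apply Lemma~\ref{lemma2}, which gives a uniform bound $|F_l(\eta)|\le C_0$ for all $l$ and $\eta\in[0,\eta_\infty)$. This lets me factor the $\eta$-dependence out:
\[
\bigl\|u-u_L\bigr\|_{L_2(\Omega\times\mathbb{S}^2)}^2 \le C_0^{2}\sum_{l=L+1}^{\infty}(2l+1)C_l.
\]
Taking square roots yields the claimed estimate with $C=C_0$, which is independent of $\eta$.

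I don't expect a serious obstacle: the argument is essentially Parseval plus a uniform bound on the temporal factor $F_l(\eta)$. The only point requiring a little care is justifying the interchange of expectation, the spherical integral, and the infinite sum when squaring the series; this follows from the $L_2(\Omega\times\mathbb{S}^2)$ convergence established in Theorem~\ref{th_stoch} together with the assumption \eqref{eq:ang_conv}, so the computation is rigorous once those ingredients are cited.
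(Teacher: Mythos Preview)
Your proposal is correct and follows essentially the same line as the paper: write the tail, compute the squared $L_2(\Omega\times\mathbb{S}^2)$ norm via the orthogonality of $\{a_{lm}\}$ and of the spherical harmonics to get $\sum_{l>L} C_l(2l+1)F_l^2(\eta)$, then apply Lemma~\ref{lemma2} for the uniform bound on $F_l(\eta)$. The only cosmetic difference is that the paper phrases the inner simplification via the addition formula (evaluated at $\Theta=0$ and integrated over $\mathbb{S}^2$) rather than via the $L^2$ orthonormality of $Y_{lm}$, but this yields the same identity.
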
   \begin{proof}

The truncation error field $\widehat{u}_L(\eta, \theta, \varphi) = u(\eta, \theta, \varphi) - u_L(\eta, \theta, \varphi), \ L \in \mathbb{N},$ is a centered Gaussian random field, i.e. $E\widehat{u}_L(\eta, \theta, \varphi) = 0$ for all $L\in\mathbb{N},\ \theta\in[0,\pi], \ \varphi\in[0,2\pi),$ and $\eta\in[0,\eta_\infty).$ It follows from \eqref{sol_rand}, \eqref{approx} and the orthogonality of $\{a_{lm}\}$ that

\[  \left|\left| u(\eta, \theta, \varphi) - u_L(\eta, \theta, \varphi)\right|\right|^2_{L_2(\Omega \times S^2)} =  \int\limits_{0}^{\pi}\int\limits_{0}^{2\pi} E\bigg( \sum\limits_{l=L+1}^\infty F_l(\eta)\sum\limits_{m=-l}^l a_{lm} Y_{lm}(\theta, \varphi) \bigg)^2 \sin\theta  d\varphi d\theta.\] Then, due to the addition theorem for the spherical harmonics
\[||u(\eta,\theta,\varphi)-u_L(\eta,\theta,\varphi)||^2_{L_2(\Omega\times\mathbb{S}^2)}=C\sum_{l=L+1}^{\infty} C_l (2l+1) F_l^2(\eta).\] The statement of the theorem follows from Lemma~\ref{lemma2}.

\end{proof}

\begin{theorem}Let $u(\eta, \theta, \varphi)$ be the solution of the initial value problem \eqref{eq_main}, \eqref{cond_rand1}, \eqref{cond_rand2}. If

\[ \label{assumth4}\sum\limits_{l=1}^\infty C_ll^3 <+\infty,\] then for each $\eta\in[0,\eta_\infty)$ and $h\in (0,\eta_\infty-\eta)$

\[ ||u(\eta+h,\theta,\varphi)-u(\eta,\theta,\varphi)||_{L_2(\mathbb{S}^2\times \Omega)} \leq Ch, \] where the constant $C$ does not depend on $\eta.$

\end{theorem}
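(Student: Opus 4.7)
First, I would exploit the Laplace series representation \eqref{sol_rand}, the orthonormality of the spherical harmonics in $L^2(\mathbb{S}^2)$, and the covariance structure $Ea_{lm}a^*_{l'm'}=\delta_{ll'}\delta_{mm'}C_l$ to derive the Parseval-type identity
\[ \|u(\eta+h,\theta,\varphi)-u(\eta,\theta,\varphi)\|^2_{L_2(\mathbb{S}^2\times\Omega)} = \sum_{l=0}^\infty (2l+1)\,C_l\,(F_l(\eta+h)-F_l(\eta))^2. \]
By the mean value theorem, $|F_l(\eta+h)-F_l(\eta)|\leq h\sup_{\xi\in[\eta,\eta+h]}|F_l'(\xi)|$, so after extracting the $h^2$ factor the whole problem reduces to establishing a uniform bound
\[ \sup_{\eta\in[0,\eta_\infty)}|F_l'(\eta)|\leq C\,z_l, \qquad l\geq 1, \]
with a constant independent of $l$ (and $F_0\equiv 1$ contributes nothing).

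The main obstacle is this derivative estimate; differentiating \eqref{eq:fl} produces a combination of products of Bessel functions and their derivatives that is awkward to bound directly near the two endpoints of $[0,\eta_\infty)$. My plan is to sidestep this by an energy argument applied to the ODE \eqref{eq_eta}. Setting $E_l(\eta):=F_l'(\eta)^2+z_l^2 F_l(\eta)^2$ and multiplying \eqref{eq_eta} by $2F_l'(\eta)$ yields
\[ E_l'(\eta) = -\frac{2(\eta_\infty c^2+D)}{D(\eta_\infty-\eta)}\,F_l'(\eta)^2\leq 0, \]
so $E_l$ is non-increasing on $[0,\eta_\infty)$ because the damping coefficient is strictly positive there. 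The initial conditions built into Theorem~\ref{th1} give $F_l'(0)=0$, while plugging $\eta=0$ into \eqref{eq:fl} and invoking the Wronskian identity $J_\nu(x)Y_{\nu-1}(x)-J_{\nu-1}(x)Y_\nu(x)=\tfrac{2}{\pi x}$ on the coefficients $K_1^{(l)},K_2^{(l)}$ collapses the result to $F_l(0)=1$. Hence $E_l(0)=z_l^2$, and therefore $|F_l'(\eta)|^2\leq E_l(\eta)\leq z_l^2$ uniformly in $\eta\in[0,\eta_\infty)$, which is exactly the required bound.

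Finally, combining the Parseval identity with $|F_l(\eta+h)-F_l(\eta)|\leq h\,z_l$ and using $(2l+1)z_l^2\leq C l^3$, I would conclude
\[ \|u(\eta+h,\theta,\varphi)-u(\eta,\theta,\varphi)\|^2_{L_2(\mathbb{S}^2\times\Omega)} \leq C h^2\sum_{l=1}^\infty C_l\, l^3, \]
which is finite by the assumption of the theorem. Taking square roots yields the Lipschitz bound with constant independent of $\eta$, as required.
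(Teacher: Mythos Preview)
Your proof is correct and takes a genuinely different route from the paper's. Both start from the same Parseval identity and reduce matters to a uniform-in-$\eta$ bound of size $O(z_l)$ on the increment $F_l(\eta+h)-F_l(\eta)$, but the mechanisms diverge from there. The paper splits $F_l$ into its $J_\nu$ and $Y_\nu$ pieces, applies the mean value theorem to $x^\nu J_\nu(x)$ and $x^\nu Y_\nu(x)$ via the identities $(x^\nu J_\nu)'=x^\nu J_{\nu-1}$ and $(x^\nu Y_\nu)'=x^\nu Y_{\nu-1}$, and then invokes the global bounds on $\sqrt{x}J_\nu(x)$, $\sqrt{x}Y_\nu(x)$ together with the estimates $|K_i^{(l)}|\le C\sqrt{z_l}$ to assemble $|A_l|,|B_l|\le Ch z_l$. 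Your energy argument bypasses all the Bessel-function bookkeeping: the ODE \eqref{eq_eta} is a damped oscillator with positive damping on $[0,\eta_\infty)$, so the Lyapunov functional $E_l=(F_l')^2+z_l^2F_l^2$ is non-increasing, and the normalisation $F_l(0)=1$, $F_l'(0)=0$ pins $E_l(0)=z_l^2$, giving $|F_l'|\le z_l$ directly. This is shorter, more conceptual, and as a free by-product yields $|F_l(\eta)|\le 1$ uniformly, a sharper form of Lemma~\ref{lemma2}. The paper's approach, on the other hand, makes the dependence on the explicit Bessel structure visible, which is consistent with the surrounding lemmas (Lemmas~\ref{lemma1}--\ref{lemma2}) that already develop those asymptotics.
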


\begin{proof}
For $h<\eta_\infty-\eta,$ let us consider
\[ ||u(\eta+h,\theta,\varphi)-u(\eta,\theta,\varphi)||^2_{L_2(\Omega\times \mathbb{S}^2)} =  \sum\limits_{l=1}^\infty C_l(2l+1) (F_l(\eta+h)-F_l(\eta))^2.\] By \eqref{eq:fl} and applying the Cauchy inequality, one obtains that
\[ ||u(\eta+h,\theta,\varphi)-u(\eta,\theta,\varphi)||^2_{L_2(\Omega\times\mathbb{S}^2)} \leq C \sum\limits_{l=1}^\infty C_l (2l+1) (A^2_l(\eta,h)+B^2_l(\eta,h)),\] where
\[A_l(\eta,h):= K_1^{(l)}\big( \eta_\infty - (\eta+h))^\nu J_\nu(z_l(\eta_\infty - (\eta+h))) - (\eta_\infty - \eta)^\nu J_\nu(z_l(\eta_\infty - \eta)) \big)\] and
\[B_l(\eta,h):= K_2^{(l)}\big( \eta_\infty - (\eta+h))^\nu Y_{\nu}(z_l(\eta_\infty - (\eta+h))) - (\eta_\infty - \eta)^\nu Y_{\nu}(z_l(\eta_\infty - \eta)) \big).\]
First, let us estimate $|A_l(\eta,h)|.$ Let $f(x):=x^\nu J_{\nu}(x),$ then
\[ A_l(\eta,h) = K_1^{(l)}{z_l^{-\nu}}(f(z_l(\eta_\infty - (\eta+h)))-f(z_l(\eta_\infty - \eta))).\]
  By the mean value theorem
\[|f(z_l(\eta_\infty - (\eta+h)))-f(z_l(\eta_\infty - \eta))| \leq z_l h \max_x|(x^\nu J_\nu(x))'|,\] where the maximum is taken over the interval $[z_l(\eta_\infty - (\eta+h)), z_l(\eta_\infty - \eta)].$ As $(x^\nu J_\nu(x))' = x^\nu J_{\nu-1}(x),$ one obtains
\[{z_l^{-\nu}} |f(z_l(\eta_\infty - (\eta+h)))-f(z_l(\eta_\infty - \eta))|\leq   {h}z_l^{-\nu+1} \]
\[\times \max_{\alpha\in[0,h]} |(z_l( \eta_\infty - (\eta+\alpha)))^\nu J_{\nu-1}(z_l(\eta_\infty - (\eta+\alpha)))|.\] As $\sqrt{x}J_\nu(x)$ is a bounded function on $[0,\infty)$ and $\nu>1,$ the next upper bound holds true
\[  |A_l(\eta,h)| \leq |K_1^{(l)}| C {h}z_l^{-\nu+1} \max_{\alpha\in[0,h]} (z_l( \eta_\infty - (\eta+\alpha)))^{\nu-\frac{1}{2}} \leq C|K_1^{(l)}| h \sqrt{z_l}.\]

For $B_l(\eta, h),$ by setting $g(\eta):= x^\nu Y_{\nu}(x)$ and using analogous calculations one obtains that
\[B_l(\eta,h)\leq {z_l^{-\nu}}|K_2^{(l)}|\cdot |g(z_l(\eta_\infty - (\eta+h)))-g(z_l(\eta_\infty - \eta))| \leq   {h}{z_l^{-\nu+1}}|K_2^{(l)}|\]\[\times\max_{\alpha\in[0,h]}| (z_l( \eta_\infty - (\eta+\alpha)))^\nu Y_{\nu-1}(z_l(\eta_\infty - (\eta+\alpha)))| \leq  {h}{z_l^{-\nu+1}}|K_2^{(l)}|\max_{x\in[0,z_l\eta_\infty]}|x^\nu Y_{\nu-1}(x)|\]\[\leq C{h}{z_l^{-\nu+1}}|K_2^{(l)}|(z_l\eta_\infty)^{\nu-\frac{1}{2}}\leq C|K_2^{(l)}|hz_l^{\frac{1}{2}},\] as $|x^\nu Y_{\nu-1}(x)|$ is bounded on $[0,A], \ A>0,$ and $|x^\nu Y_{\nu-1}(x)|\leq Cx^{\nu-\frac{1}{2}}$ when $x\geq A.$

Using the asymptotics of the functions $J_\nu(\cdot)$ and $Y_\nu(\cdot),$ it is straightforward to verify that the constants $K_1^{(l)}$ and $K_1^{(2)}$ are bounded as $|K_1^{(l)}|\leq C\sqrt{z_l}$ and $|K_2^{(l)}|\leq C\sqrt{z_l}$ for all $l$. Thus, $A^2_l(\eta,h) \leq Ch^2z^2_l\sim Ch^2l^2.$ Analogously, $B^2_l(\eta,h)\leq Ch^2 z^2_l \sim Ch^2l^2$  which, by the assumption~(\ref{assumth4}), implies the statement of the theorem. \end{proof}

\section{On excursion probabilities}\label{sec_excur}

This section studies the excursion probabilities for the random solution obtained in section~\ref{sect_rand}. The main approach used in this section is based on the metric entropy theory. This section also studies errors of approximations of extremes of $u(\eta, \theta, \varphi)$ by extremes of the truncated field $u_L(\eta, \theta, \varphi).$ The approximation $u_L(\eta, \theta, \varphi)$ converges to $u(\eta, \theta, \varphi)$ in the space $L_2(\Omega \times \mathbb{S}^2),$ as $L\to\infty.$ However, in general, this type of convergence does not imply that the extremes of $u(\eta, \theta, \varphi)$ can be effectively approximated using the extremes of $u_L(\eta, \theta, \varphi).$ This section  obtains estimates of probabilities of large deviations between the extremes of $u(\eta, \theta, \varphi)$ and $u_L(\eta, \theta, \varphi).$

We first provide some results that will be used later to study properties of the distributions of extremes. In what follows $T$ is a subset of $\mathbb{R}^n.$

\begin{theorem} [{\rm \cite[Theorem 2.1.1.]{adler2007random}}]
\label{extreme} Let $\xi(x), \ x\in T,$ be a centered a.s. bounded Gaussian field, then  for $y\geq 0$
\begin{equation*}
P\left(  \sup\limits_{x\in T} \xi(x) - E\sup\limits_{x\in T} \xi(x) \geq y\right) \leq \exp\left({-\frac{y^2}{2\sigma_T^2}}\right),
\end{equation*} where $\sigma_T^2:=\sup\limits_{x\in T}E\xi^2(x),$ or, equivalently,
\begin{equation*} \label{sup}
P\left(  \sup\limits_{x\in T} \xi(x) \geq y \right) \leq \exp\left({-\frac{\left(y - E(\sup_{x\in T} \xi(x))\right)^2}{2\sigma_T^2}}\right)
\end{equation*} for $y\geq  E\sup\limits_{x\in T} \xi(x).$

\end{theorem}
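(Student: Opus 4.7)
The statement is the classical Borell--Tsirelson--Ibragimov--Sudakov concentration inequality for Gaussian suprema, and the plan is to obtain it as a direct consequence of Gaussian concentration for Lipschitz functionals on $\mathbb{R}^n$, together with a finite-dimensional approximation.

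First, I would reduce to the case of a finite index set. Using the a.s. boundedness of $\xi$ and passing to a separable modification, the supremum over $T$ coincides almost surely with the supremum over a countable dense subset $T_0=\{x_1,x_2,\ldots\}\subset T$. Setting $M_n=\max_{1\le i\le n}\xi(x_i)$, one has $M_n\uparrow\sup_{x\in T}\xi(x)$ almost surely and in $L^1$, so in particular $EM_n\to E\sup_{x\in T}\xi(x)$. If the inequality is established for each $M_n$ with the same constant $\sigma_T$, Fatou's lemma extends it to the full supremum.

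Second, for finite $T=\{x_1,\ldots,x_n\}$, write $(\xi(x_1),\ldots,\xi(x_n))=A\mathbf{g}$ with $\mathbf{g}\sim N(0,I_n)$ and $AA^\top=\Sigma$, the covariance matrix of the finite-dimensional distribution. Define $F:\mathbb{R}^n\to\mathbb{R}$ by $F(\mathbf{g})=\max_i(A\mathbf{g})_i$. A short computation shows that $F$ is Lipschitz with constant $\sigma_T$: if $i^*$ attains the maximum at $\mathbf{g}$, then
\[
F(\mathbf{g})-F(\mathbf{g}')\le(A(\mathbf{g}-\mathbf{g}'))_{i^*}\le\|A_{i^*}\|_2\,\|\mathbf{g}-\mathbf{g}'\|_2=\sqrt{\Sigma_{i^*i^*}}\,\|\mathbf{g}-\mathbf{g}'\|_2\le\sigma_T\|\mathbf{g}-\mathbf{g}'\|_2,
\]
and the reverse inequality holds by symmetry. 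Applying the Gaussian concentration inequality, which states that a $\sigma_T$-Lipschitz function of a standard Gaussian vector satisfies $P(F(\mathbf{g})-EF(\mathbf{g})\ge y)\le\exp(-y^2/(2\sigma_T^2))$, yields the claim in the finite-dimensional case. The second display in the theorem is then merely the algebraic rearrangement obtained by replacing $y$ by $y-E\sup_T\xi$.

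The main obstacle is the Gaussian concentration inequality itself, which is a non-elementary input usually established via the Gaussian isoperimetric inequality of Borell, via a heat-semigroup (Ornstein--Uhlenbeck) argument, or by a Maurey-style interpolation; I would quote it as a black box rather than reprove it. A secondary technical point is the separability/measurability reduction, where the a.s. boundedness hypothesis is essential, as without it $E\sup_{x\in T}\xi(x)$ need not be finite and the inequality loses meaning.
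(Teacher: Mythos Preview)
Your proof sketch is a correct and standard route to the Borell--TIS inequality: the separability reduction to finite $T$, the representation $(\xi(x_1),\ldots,\xi(x_n))=A\mathbf g$, the Lipschitz bound $\|F\|_{\mathrm{Lip}}\le\sigma_T$, and the appeal to Gaussian concentration are all fine, as is the limiting step via monotone convergence and Fatou. One minor quibble: in the approximation step you should argue that the inequality for $M_n$ holds with $\sigma_T^2$ (not $\max_{i\le n}\Sigma_{ii}$) so that the constant is uniform in $n$; you do this implicitly, but it is the point that makes the limit go through cleanly.

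However, there is nothing to compare against: the paper does not prove this theorem. It is stated with an explicit citation to \cite[Theorem~2.1.1]{adler2007random} and then used as a black box in the proofs of Theorem~\ref{th:excur} and the subsequent Corollary. So your proposal is not an alternative to the paper's argument---it is a proof of a result the paper simply imports.
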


Let us recall some results from the metric entropy theory. The canonical metric generated by $\xi(x), \ x\in T,$  is
\[ d(s,t) = \left( E(\xi(s) - \xi(t))^2 \right)^{\frac{1}{2}}, \  \ \ s,t\in T.\] Let $B(t,\varepsilon):=\{ s\in T: d(t,s)\leq \varepsilon\}$ be a ball in $T$ with the center at $t\in T$ and the radius $\varepsilon.$ Then, the following Fernique's inequality holds true \cite[Theorem 2.5.]{talagrand1996majorizing}
\[ E\sup\limits_{x\in T} \xi(x) \leq K\inf\limits_\mu\sup\limits_{x\in T} \int_{0}^{{\rm diam}(T)}\sqrt{\log\left(\frac{1}{\mu(B(x,\varepsilon))}\right)}d\varepsilon,\] where $\mu$ is a probability measure on $T,$ ${\rm diam}(T)$ is the diameter of $T$ in the metric $d(\cdot)$, and $K$ is a universal constant \cite{talagrand2014upper}. We will use this inequality to estimate $E\sup\limits_{\theta,\varphi}u(\eta, \theta, \varphi).$ In the following results $\eta\in[0,\eta_\infty)$  is fixed.

By \eqref{correl}, the canonical pseudometric generated by $u(\eta,\theta,\varphi)$ on the sphere is
\[ d_\eta(\Theta) := \left( E(u(\eta,\theta,\varphi) - u(\eta,\theta',\varphi'))^2 \right)^{\frac{1}{2}}=\sqrt{2}\left( Eu^2(\eta,\theta,\varphi) - cov(u(\eta,\theta,\varphi), u(\eta,\theta',\varphi')) \right)^{\frac{1}{2}}\]
\begin{equation}\label{eq:met} = (\sqrt{2\pi})^{-1}\left( \sum_{l=0}^\infty C_l(2l+1) F_l^2(\eta)(1-P_l(\cos(\Theta))) \right)^\frac{1}{2},\end{equation} where $\Theta$ is the angular distance between the points $(\theta, \varphi)$ and $(\theta', \varphi').$

Let us define "balls on the sphere" in terms of the above pseudometric $d_\eta(\cdot)$
\[ B_\eta((\theta,\varphi), \varepsilon) = \{ (\theta', \varphi'): d_\eta(\Theta)\leq \varepsilon \}.\]  Let us also introduce the function \begin{equation}\label{geps}
    g_\eta(\varepsilon) = \inf\{\Theta: \ d_\eta(\Theta) \geq \varepsilon\}.
\end{equation}
For the simplicity of the exposition, let us denote the covariance function of $u(\eta,\theta,\varphi)$ by
\[ K_\eta (\Theta) := Cov(u(\eta, \theta, \varphi), u(\eta, \theta', \varphi')) = (4\pi)^{-1}\sum_{l=0}^\infty   C_l  (2l+1) F^2_l(\eta)P_l(\cos \Theta),\] where $\Theta$ is the angular distance between the points $(\theta,\varphi)$ and $(\theta',\varphi').$ Note, that by \eqref{eq:met}  $K_\eta(\Theta)$ and $d_\eta(\Theta)$ are linked by the formula \[K_\eta(0) - K_\eta(\Theta)  = \frac{1}{2}d_\eta^2(\Theta). \]
\begin{theorem}\label{lm:finit} Let $u(\eta, \theta, \varphi)$ be the solution of the initial value problem \eqref{eq_main}, \eqref{cond_rand1}, \eqref{cond_rand2}. If
\begin{equation}\label{th6:cond} \sum\limits_{l=0}^\infty C_ll^{1+\beta} <+ \infty\end{equation} with $\beta \in (0,2],$ then
    \begin{equation}\label{estim}
        E\sup\limits_{\theta,\varphi}u(\eta, \theta, \varphi) \leq K \int\limits_0^R \sqrt{\log \left( \frac{2}{1-\cos(g_\eta(\varepsilon))}\right)}d\varepsilon,
    \end{equation}
    where $R=\max_{\Theta\in[0,\pi]} d_\eta(\Theta),$ and the integral in \eqref{estim} is finite.

\end{theorem}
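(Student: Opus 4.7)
The plan is to apply Fernique's inequality (as stated in the excerpt) to the centred Gaussian field $u(\eta,\theta,\varphi)$ with the normalised Lebesgue measure $\mu(d\theta\,d\varphi)=(4\pi)^{-1}\sin\theta\,d\theta\,d\varphi$ on $\mathbb{S}^2$, viewed as a probability measure on the pseudometric space $(\mathbb{S}^2,d_\eta)$. Because the covariance $K_\eta(\Theta)$ depends only on the angular distance, the field is isotropic for fixed $\eta$, so the quantity $\mu(B_\eta((\theta,\varphi),\varepsilon))$ does not depend on the centre $(\theta,\varphi)$; the supremum over $(\theta,\varphi)$ in Fernique's inequality therefore collapses to a single quantity. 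The diameter of $\mathbb{S}^2$ in $d_\eta$ is precisely $R=\max_{\Theta\in[0,\pi]}d_\eta(\Theta)$.

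The next step is to bound $\mu(B_\eta((\theta,\varphi),\varepsilon))$ from below in terms of $g_\eta(\varepsilon)$. By the definition (\ref{geps}), every point at angular distance strictly less than $g_\eta(\varepsilon)$ from $(\theta,\varphi)$ satisfies $d_\eta(\Theta)<\varepsilon$ and hence lies in $B_\eta((\theta,\varphi),\varepsilon)$. The normalised area of a spherical cap of angular radius $g_\eta(\varepsilon)$ equals $(1-\cos g_\eta(\varepsilon))/2$, so
\[
\mu(B_\eta((\theta,\varphi),\varepsilon)) \geq \frac{1-\cos g_\eta(\varepsilon)}{2}.
\]
Substituting this lower bound into Fernique's inequality produces the estimate (\ref{estim}).

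To verify that the integral is finite I would use the elementary interpolation inequality
\[
1-P_l(\cos\Theta)\leq C\,(l\Theta)^\beta, \qquad \beta\in(0,2],
\]
obtained by combining the trivial bound $1-P_l(\cos\Theta)\leq 2$ with the standard polynomial bound $1-P_l(\cos\Theta)\leq C(l\Theta)^2$ (a direct consequence of $P_l(1)=1$, $P_l'(1)=l(l+1)/2$ and the Legendre differential equation) and interpolating between the two regimes. Inserting this estimate into (\ref{eq:met}) and invoking the uniform boundedness of $F_l(\eta)$ from Lemma~\ref{lemma2} yields
\[
d_\eta^2(\Theta) \leq C\,\Theta^\beta \sum_{l=1}^\infty C_l\,l^{1+\beta},
\]
which is finite under the hypothesis (\ref{th6:cond}). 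Inverting this inequality gives $g_\eta(\varepsilon)\geq c\,\varepsilon^{2/\beta}$ for small $\varepsilon$, so $1-\cos g_\eta(\varepsilon)$ decays no faster than $\varepsilon^{4/\beta}$; the integrand in (\ref{estim}) is therefore of order $\sqrt{\log(1/\varepsilon)}$ near zero and the integral converges on $[0,R]$.

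The main technical obstacle is the sharp version of the interpolation bound for $1-P_l(\cos\Theta)$: the exponent $\beta$ of the summability assumption (\ref{th6:cond}) has to match the power of $\Theta$ in the estimate for $d_\eta^2(\Theta)$ so that the assumed series converges and simultaneously so that the resulting lower bound on $g_\eta(\varepsilon)$ is strong enough to make the logarithmic integral finite. Once this matching is carried out, the remaining steps are routine substitutions into Fernique's inequality and an elementary integrability check.
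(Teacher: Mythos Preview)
Your proposal is correct and follows essentially the same route as the paper: Fernique's inequality with the uniform probability measure on $\mathbb{S}^2$, the spherical-cap lower bound $\mu(B_\eta(\cdot,\varepsilon))\ge (1-\cos g_\eta(\varepsilon))/2$, and the H\"older estimate $d_\eta^2(\Theta)\le C\Theta^\beta$ under (\ref{th6:cond}) to force finiteness. The only difference is cosmetic: the paper obtains the H\"older bound by citing \cite[Lemma~4.2]{lang2015isotropic} and then reaches integrability through a chain of logarithmic substitutions, whereas you derive it directly from the Legendre interpolation $1-P_l(\cos\Theta)\le C(l\Theta)^\beta$ together with Lemma~\ref{lemma2} and invert to $g_\eta(\varepsilon)\ge c\,\varepsilon^{2/\beta}$, which is slightly more self-contained but otherwise equivalent.
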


\begin{proof}

By applying Fernique's inequality, it follows that
\[ E\sup\limits_{\theta,\varphi}u(\eta, \theta, \varphi) \leq K\inf\limits_\mu\sup\limits_{\theta, \varphi} \int_{0}^R\sqrt{\log\left( \frac{1}{\mu(B_\eta((\theta, \varphi),\varepsilon))}\right)}d\varepsilon.\] The selection of $\mu$ as the normalised uniform distribution on the sphere results in the upper bound
\[  E\sup\limits_{\theta,\varphi}u(\eta, \theta, \varphi) \leq K\sup\limits_{\theta, \varphi} \int_{0}^R\sqrt{\log\left(\frac{Leb(\mathbb{S}^2)}{Leb(B_\eta((\theta, \varphi),\varepsilon))}\right)}d\varepsilon.\]  As $u(\eta,\theta,\varphi)$ is an isotropic field, $Leb(B_\eta((\theta, \varphi),\varepsilon))$ is the same for all $\theta, \varphi$ and can be replaced with $Leb(B_\eta(\textbf{\textit{0}},\varepsilon))$. Note that $B_\eta(\textbf{\textit{0}},\varepsilon)$ is not necessarily a spherical cap as $d_\eta(\Theta)$ is not necessarily strictly increasing in $\Theta$. However, $B_\eta(\textbf{\textit{0}},\varepsilon)$ always contains a non-degenerate (not reduced to a single point) spherical cap. Indeed,  $u(\eta,\theta,\varphi)$ is $L_2$-continuous with respect to $\theta$ and $\phi$ due to condition \eqref{th6:cond}, see \cite[Lemma 4.3]{lang2015isotropic}. Then, its decomposition~\eqref{sol_rand} into spherical harmonics has the coefficients $a_{lm}F_l(\eta), \ l=1,2,...,$ that are non-zero with probability~1, see Lemma~\ref{lemma2}.  Hence, $u(\eta,\theta,\varphi)$ is not a constant random variable over a set of $\theta$ and $\phi$ and it follows from Lemma~\ref{lemma:const}  that the spherical cap is non-degenerate.

The value of  $ g_\eta(\varepsilon) \in [0,\pi]$  is the polar angle of the largest such cap.
 From the comparison of the corresponding areas it follows that $Leb(B_\eta(\textbf{\textit{0}},\varepsilon)) \geq 2\pi(1-\cos(g_\eta(\varepsilon))$ and
\[
 E\sup\limits_{\theta,\varphi}u(\eta, \theta, \varphi) \leq K \int\limits_0^R \sqrt{\log \left( \frac{2}{1-\cos(g_\eta(\varepsilon))}\right)}d\varepsilon.
\]

Now let us provide mild conditions that guarantee the convergence of the above integral  for any fixed $\eta\in[0,\eta_\infty).$

The Gaussian random field $u(\eta,\theta,\varphi)$ is a.s.~continuous and  isotropic and the corresponding spherical cap is non-degenerated. Due to Lemma \ref{lemma:const}, the continuous covariance function $K_\eta (\Theta)$ is non-constant in any neighbourhood of the origin. Thus, by the definitions \eqref{eq:met} and \eqref{geps}  $g_\eta(\varepsilon) \to 0,$ when $\varepsilon\to 0.$

  From $1-\cos(x) \sim x^2/2,$ $x\to 0,$ it follows that
  \[ \lim\limits_{x\to 0}\frac{\log\left(\frac{2}{1-\cos(x)}\right)}{-\log(x)} =
  \lim\limits_{x\to 0}\frac{\log\left(\frac{4}{x^2}\right)}{-\log(x)}=2\]
  and
 \[\log\left(\frac{2}{1-\cos(g_\eta(\varepsilon))}\right)\sim-2\log(g_\eta(\varepsilon)), \ \ \ \varepsilon\to0,\]
 which means that the integral in $\eqref{estim}$ is finite if and only if
\[ \int\limits_0^R\sqrt{-\log(g_\eta(\varepsilon))}d\varepsilon < \infty.\]

The above holds true if there is $\alpha>-1$ and $\varepsilon_0>0,$ such that for all $\varepsilon\leq\varepsilon_0$   it holds
$-\log (g_\eta(\varepsilon))  < C\varepsilon^{2\alpha},$ which gives $g_\eta(\varepsilon) > \exp({-C\varepsilon^{2\alpha}}).$
Then, it follows from the definition~(\ref{geps}) of $g_\eta(\varepsilon)$ that  $d_\eta(\exp({-C\varepsilon^{2\alpha}})) < \varepsilon.$ Denoting $\Theta = \exp({-C\varepsilon^{2\alpha}})$ we get that for $\beta = -{1}/{\alpha}\in({1}, \infty)$  and all small enough $\Theta$  it holds
\[ d_\eta(\Theta) < \frac{C}{(-\log \Theta)^{\beta/2}}.\] and

\[ K_\eta(0) -  K_\eta(\Theta)  = \frac{1}{2}d_\eta^2(\Theta)< \frac{C}{(-\ln \Theta)^{\beta}}. \]
 As $\log(1/\Theta) \leq  1/\Theta$   for sufficiently small $ \Theta,$ the later inequality is satisfied if
 \begin{equation}\label{cond1}
     K_\eta(0) - K_\eta(\Theta)  \leq  C \Theta^\beta.
     \end{equation}  Note, that by \cite[Lemma 4.2]{lang2015isotropic} the inequality (\ref{cond1})  holds true if
$ \sum\limits_{l=0}^\infty C_ll^{1+\beta} < \infty$ and $\beta \leq 2,$ which finishes the proof of  the theorem.
\end{proof}

Now we are ready to state the main results of this section.

\begin{theorem}\label{th:excur}
For each fixed $\eta\in[0,\eta_\infty)$ the following estimate holds true
\[ P\left(  \sup\limits_{\theta,\varphi}u(\eta,\theta,\varphi) > x \right) \leq \exp\left(-\frac{\left(x-E(\sup_{\theta,\varphi}u(\eta,\theta,\varphi))\right)^2}{2\sigma_\eta^2}\right),\] where
$\sigma_\eta^2 = (4\pi)^{-1}\sum_{l=0}^\infty   C_l  (2l+1) F^2_l(\eta).$

If
\begin{equation*} \sum\limits_{l=0}^\infty C_ll^{1+\beta} <+ \infty\end{equation*} with $\beta \in (0,2],$ then

\[ P\left(  \sup\limits_{\theta,\varphi}u(\eta,\theta,\varphi) > x \right) \leq \exp\left( -\frac{\left(x- K_1\right)^2}{2\sigma_{\eta,L}^2}
    \right)\] for $x> K_1:=K \int\limits_0^R \sqrt{\log \left( \frac{2}{1-\cos(g_{\eta}(\varepsilon))}\right)}d\varepsilon,$ where the latter integral is finite.
\end{theorem}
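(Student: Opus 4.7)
The plan is to derive both bounds from the Borell--TIS concentration inequality (Theorem~\ref{extreme}) applied to the field $u(\eta,\theta,\varphi)$ viewed as a function of the spherical coordinates for each fixed conformal time $\eta$.

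First, I would verify the hypotheses of Theorem~\ref{extreme}. The field $u(\eta,\theta,\varphi)$ is centered Gaussian by \eqref{sol_rand}, and it is almost surely continuous on the compact sphere (hence bounded): this follows from the representation~\eqref{sol_rand} because the initial field $T(\theta,\varphi)$ is assumed a.s.~twice continuously differentiable (or satisfies the hypothesis of Lemma~\ref{lemma_cont}) and the coefficients $F_l(\eta)$ are uniformly bounded by Lemma~\ref{lemma2}. The pointwise variance is obtained from the covariance formula~\eqref{correl} by letting $\eta=\eta'$ and the two spherical points coincide; using $P_l(1)=1$,
\[
E u^2(\eta,\theta,\varphi)=(4\pi)^{-1}\sum_{l=0}^\infty C_l(2l+1)F_l^2(\eta),
\]
which is independent of $(\theta,\varphi)$ by the isotropy inherited from $T$. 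Hence $\sigma_\eta^2=\sup_{\theta,\varphi}Eu^2(\eta,\theta,\varphi)$ coincides with the quantity stated in the theorem, and a direct application of Theorem~\ref{extreme} yields the first bound.

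For the second bound, the additional summability condition $\sum_l C_l l^{1+\beta}<\infty$ with $\beta\in(0,2]$ permits invocation of Theorem~\ref{lm:finit}, which delivers
\[
E\sup_{\theta,\varphi}u(\eta,\theta,\varphi)\leq K_1:=K\int_0^R\sqrt{\log\Bigl(\tfrac{2}{1-\cos(g_\eta(\varepsilon))}\Bigr)}\,d\varepsilon,
\]
with $K_1<\infty$. Substituting this upper bound for $E\sup u$ into the exponent obtained in the first part yields the second bound for any $x>K_1$: since the map $m\mapsto-(x-m)^2/(2\sigma_\eta^2)$ is increasing in $m$ on $(-\infty,x)$, replacing $m=E\sup_{\theta,\varphi}u(\eta,\theta,\varphi)$ by the potentially larger quantity $K_1$ only weakens the exponential bound, so the inequality is preserved.

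The main delicate point is ensuring that Theorem~\ref{extreme} applies, namely that $u(\eta,\theta,\varphi)$ is almost surely bounded on the sphere for each fixed $\eta\in[0,\eta_\infty)$; once this is granted, the rest of the argument reduces to the variance identity and the substitution of the Fernique-type bound from Theorem~\ref{lm:finit}. I would also briefly remark that under the second set of hypotheses the constant $\sigma_{\eta,L}^2$ appearing in the statement coincides with the $\sigma_\eta^2$ computed above (this appears to be a notational inheritance from the truncated-field versions treated elsewhere in the section).
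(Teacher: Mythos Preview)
Your proposal is correct and follows essentially the same approach as the paper: the paper's proof is a one-sentence appeal to Theorems~\ref{extreme} and~\ref{lm:finit}, noting that replacing $E\sup u$ by the upper bound $K_1$ can only weaken the exponential estimate when $x>K_1$. Your version is more detailed (explicitly checking the hypotheses of Borell--TIS, computing $\sigma_\eta^2$ from~\eqref{correl}, and spelling out the monotonicity argument), and your observation that $\sigma_{\eta,L}^2$ in the second bound should read $\sigma_\eta^2$ is correct---this is a notational slip in the paper.
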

\begin{proof}
    The statement is a direct corollary of Theorems~\ref{extreme} and~\ref{lm:finit} as the application of the estimate for $E\sup\limits_{\theta,\varphi}u(\eta,\theta,\varphi)$ given by \eqref{estim} can only increase the upper bound when $x > K_1.$
\end{proof}

Now, let us estimate of probabilities of large deviations between extremes of the solution field $u(\eta, \theta, \varphi)$ and its approximation $u_L(\eta, \theta, \varphi).$ The truncation error field $\widehat{u}_L(\eta, \theta, \varphi) = {u}(\eta, \theta, \varphi) -  u_L(\eta, \theta, \varphi), \ L \in \mathbb{N},$ is a centered Gaussian random field. Analogously to the considered results, it generates the canonic pseudometric $d_{L,\eta}(\Theta)$ on $\mathbb{S}^2$ and the function $g_{L,\eta}(\varepsilon)$ is associated to $d_{L,\eta}(\cdot),$ which is given by $g_{L,\eta}(\varepsilon):=\min\{\Theta: d_{L,\eta}(\Theta) \geq \varepsilon\}.$ Analogously to the previous results one obtains:

\begin{corollary}
For each fixed $\eta\in[0,\eta_\infty)$ the following estimate holds true
\begin{equation}\label{cor:estim}
P\left( \sup_{\theta,\varphi}|\widehat{u}_L(\eta,\theta,\varphi)| > x \right) \leq 2\exp\left(-\frac{\left(x-E\left(\sup_{\theta,\varphi}\widehat{u}_L(\eta,\theta,\varphi)\right)\right)^2}{2\sigma_{\eta,L}^2}\right)
\end{equation}
where $\sigma_{\eta,L}^2 = (4\pi)^{-1}\sum_{l=L+1}^\infty   C_l  (2l+1) F^2_l(\eta).$

If there exists $\beta \in (0,2],$ such that
\begin{equation}\label{cor1:cond} \sum\limits_{l=0}^\infty C_ll^{1+\beta} <+ \infty,\end{equation} then
\[P\left(  \sup\limits_{\theta,\varphi}|\widehat{u}_L(\eta,\theta,\varphi)| > x \right) \leq 2\exp\left( -\frac{\left(x- K_2\right)^2}{2\sigma_{\eta,L}^2}
    \right)
    \]
     for $x> K_2:=K \int\limits_0^R \sqrt{\log \left( \frac{2}{1-\cos(g_{L,\eta}(\varepsilon))}\right)}d\varepsilon,$ where the latter integral is finite.

\begin{proof} As the random field $\widehat{u}_L(\eta,\theta, \varphi)$ is centered Gaussian, it holds for $x>0$ that
\[ P\left( \sup_{\theta,\varphi}|\widehat{u}_L(\eta,\theta, \varphi)| > x \right)  \leq 2P\left( \sup_{\theta,\varphi}\widehat{u}_L(\eta,\theta, \varphi) > x \right). \]
By
\[ E \widehat{u}^2_L(\eta,\theta, \varphi) =  (4\pi)^{-1}\sum_{l=L+1}^\infty   C_l  (2l+1) F^2_l(\eta),\]  the inequality in \eqref{cor:estim} is a direct corollary of Theorem~\ref{extreme}.

The random field $\widehat{u}_L(\eta,\theta, \varphi)$ is $L_2$-continuous in $\theta$ and $\varphi$ by condition \eqref{cor1:cond}. By applying Fernique's inequality with the normalized uniform distribution $\mu$ one obtains

\[
 E\sup\limits_{\theta,\varphi}\widehat{u}_L(\eta, \theta, \varphi) \leq K \int\limits_0^R \sqrt{\log \left( \frac{2}{1-\cos(g_{L,\eta}(\varepsilon))}\right)}d\varepsilon.
\] The convergence of the above integral can be shown by using steps analogous to the proof of Theorem~\ref{lm:finit}. The application of the above estimate to \eqref{cor:estim} completes the proof.
\end{proof}

 \end{corollary}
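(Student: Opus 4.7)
My plan is to reduce the Corollary to a direct application of the concentration inequality from Theorem~\ref{extreme} and the Dudley--Fernique entropy bound used in Theorem~\ref{lm:finit}, applied to the truncation error field rather than to $u(\eta,\theta,\varphi)$ itself. First I would note that $\widehat{u}_L(\eta,\theta,\varphi)=u-u_L=\sum_{l=L+1}^{\infty}F_l(\eta)\sum_{m=-l}^{l}a_{lm}Y_{lm}(\theta,\varphi)$ is a centered, isotropic Gaussian random field on $\mathbb{S}^2$ (its law is obtained from that of $u$ by discarding the first $L+1$ spherical-harmonic blocks, and isotropy is preserved under this truncation). Computing its variance via the addition formula \eqref{add_form} and the orthonormality relations gives immediately
\[
\sup_{\theta,\varphi}E\,\widehat{u}_L^{\,2}(\eta,\theta,\varphi)=(4\pi)^{-1}\sum_{l=L+1}^{\infty}C_l(2l+1)F_l^{\,2}(\eta)=\sigma_{\eta,L}^{\,2},
\]
which is finite by condition \eqref{eq:ang_conv} and Lemma~\ref{lemma2}.

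For the first inequality, I would exploit the symmetry of centered Gaussian fields: $\widehat{u}_L$ and $-\widehat{u}_L$ have the same distribution, so
\[
P\Bigl(\sup_{\theta,\varphi}|\widehat{u}_L|>x\Bigr)\leq P\Bigl(\sup_{\theta,\varphi}\widehat{u}_L>x\Bigr)+P\Bigl(\sup_{\theta,\varphi}(-\widehat{u}_L)>x\Bigr)=2\,P\Bigl(\sup_{\theta,\varphi}\widehat{u}_L>x\Bigr).
\]
Then Theorem~\ref{extreme}, applied to $\widehat{u}_L$ with $\sigma_T^{\,2}=\sigma_{\eta,L}^{\,2}$, yields the stated bound \eqref{cor:estim}. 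A.s.~boundedness of $\widehat{u}_L$, which is required to invoke Theorem~\ref{extreme}, follows from condition \eqref{cor1:cond} and \cite[Section~2]{lang2015isotropic} (or from sample continuity in $\theta,\varphi$, since the tail field inherits the spectral decay of the full field).

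For the second bound I would replicate the Fernique argument from the proof of Theorem~\ref{lm:finit} verbatim but for $\widehat{u}_L$. The canonical pseudometric generated by $\widehat{u}_L$ is
\[
d_{L,\eta}(\Theta)=(\sqrt{2\pi})^{-1}\Biggl(\sum_{l=L+1}^{\infty}C_l(2l+1)F_l^{\,2}(\eta)\bigl(1-P_l(\cos\Theta)\bigr)\Biggr)^{1/2},
\]
so selecting $\mu$ as the normalized Lebesgue measure on $\mathbb{S}^2$ inside Fernique's inequality produces
\[
E\sup_{\theta,\varphi}\widehat{u}_L(\eta,\theta,\varphi)\leq K\int_{0}^{R}\sqrt{\log\Bigl(\tfrac{2}{1-\cos(g_{L,\eta}(\varepsilon))}\Bigr)}\,d\varepsilon=:K_2.
\]
Feeding $E\sup\widehat{u}_L\leq K_2$ into the first inequality gives the claimed tail estimate for $x>K_2$.

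The main obstacle, which also appears in Theorem~\ref{lm:finit}, is showing that $K_2<\infty$. This requires (i) checking that $g_{L,\eta}(\varepsilon)\to 0$ as $\varepsilon\to 0$, which amounts to verifying that $\widehat{u}_L$ is not a.s.\ constant on any neighbourhood; and (ii) establishing a polynomial H\"older-type bound $\operatorname{Var}\widehat{u}_L(\eta,\theta,\varphi)-\operatorname{Cov}(\widehat{u}_L,\widehat{u}_L')\leq C\Theta^{\beta}$ near $\Theta=0$. For (i) I would apply Lemma~\ref{lemma:const} together with the observation that the surviving coefficients $a_{lm}F_l(\eta)$, $l\geq L+1$, are nontrivial Gaussian (nonzero a.s.\ by Lemma~\ref{lemma2} and the Gaussianity of $a_{lm}$). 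For (ii), since $F_l^{\,2}(\eta)$ is uniformly bounded in $l$ by Lemma~\ref{lemma2}, the spectral tail $\{C_l(2l+1)F_l^{\,2}(\eta)\}_{l\geq L+1}$ inherits the summability $\sum C_l l^{1+\beta}<\infty$, and \cite[Lemma~4.2]{lang2015isotropic} produces exactly the required H\"older bound. The remainder of the convergence argument (the asymptotic comparison $\log(2/(1-\cos x))\sim -2\log x$ and the change of variables $\Theta=\exp(-C\varepsilon^{2\alpha})$) then carries over without modification from Theorem~\ref{lm:finit}.
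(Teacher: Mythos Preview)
Your proposal is correct and follows essentially the same approach as the paper: reduce $|\widehat{u}_L|$ to $\widehat{u}_L$ via Gaussian symmetry, apply the Borell--TIS inequality (Theorem~\ref{extreme}) with the computed variance $\sigma_{\eta,L}^2$, and then bound $E\sup\widehat{u}_L$ by Fernique's inequality with the uniform measure, deferring the finiteness of the entropy integral to the argument of Theorem~\ref{lm:finit}. You actually spell out the verification that $K_2<\infty$ (non-constancy via Lemma~\ref{lemma:const} and the H\"older bound via \cite[Lemma~4.2]{lang2015isotropic}) in more detail than the paper, which simply writes ``by using steps analogous to the proof of Theorem~\ref{lm:finit}.''
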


\section{Numerical studies}\label{sec_num}

This section presents numerical studies of the solution $u(\eta,\theta,\varphi)$ of the initial value problem \eqref{eq_main}, \eqref{cond_rand1} and \eqref{cond_rand2} and its truncated approximation $u_L(\eta, \theta, \varphi)$ from Section \ref{sec:approx}.

Numerical computations  were performed using the software R version 4.3.1 and Python version 3.11.5. The HEALPix representation (\mbox{\url{http://healpix.sourceforge.net}}) and the Python package "healpy" were used for computations. The Python package "pyshtools" was used to simulate spherical random fields.  The R package "rcosmo", see \cite{fryer2019rcosmo} and \cite{fryer2018rcosmo}, was used for visualisations. The R and Python code are freely available in the folder "Research materials" from the  website \mbox{\url{https://sites.google.com/site/olenkoandriy/}}. For the numerical studies, we use a map of CMB intensities obtained by the ESA mission Planck, these data are available in \cite{link1} as well as its angular power spectrum in \cite{link2}.

\subsection{Functions \texorpdfstring{$F_l(\eta)$}{F\_l}}

The evolution in time of the solution $u(\eta,\theta,\varphi)$ is governed by the functions $F_l(\eta),$ $\eta\in[0,\eta_\infty),$ $l=0,1,2,...,$ given by \eqref{eq:fl}. These functions are also the multiplication factors defining the change of the angular power spectrum of the solution with time $\eta$. Lemma \ref{lemma2.1} showed that for a fixed $\eta$ the multiplication factors have a wave structure for large $l.$ Figure \ref{fig:fl1} depicts multiplication factors $F_\eta(l)$ as functions of the argument $l$ for the fixed time moments $\eta=0.001$ and $\eta=0.002.$  One can see that $F_\eta(l),$ as the functions of the argument $l,$ form waves whose periods decrease in time. On the other hand,  Figure \ref{fig:fl2} displays $F_\eta(l)$ as functions of the argument $\eta$ for the fixed values of indices $l=3$ and $l=10.$ In this case, the functions $F_\eta(l)$ form waves whose amplitudes decrease  as they propagate. Figure \ref{fig:fl2} also demonstrates that their periods decrease with $l.$

\begin{figure}[htb!]
\centering
\begin{subfigure}{.45\textwidth}
  \centering
  \includegraphics[width=\linewidth, height=0.7\linewidth]{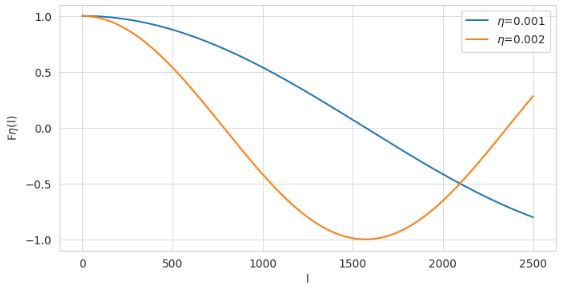}    \caption{$F_\eta(l)$ as  functions of $l$ for fixed $\eta=0.001$ and $\eta=0.002$ }
  \label{fig:fl1}
\end{subfigure}\hspace{5mm}
\begin{subfigure}{.45\textwidth}
  \centering
  \includegraphics[width=\linewidth, height=0.7\linewidth]{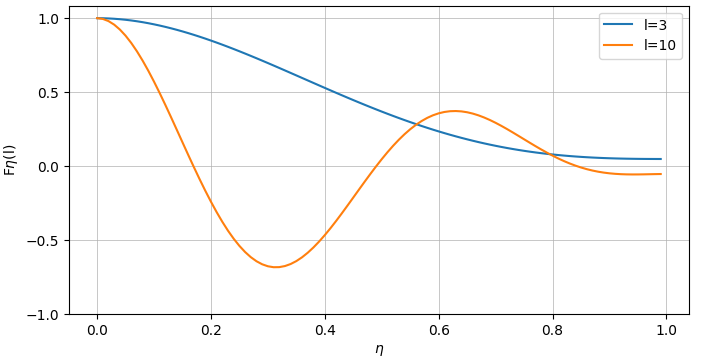} \caption{$F_\eta(l)$ as functions of $\eta$ for fixed $l=3$ and $l = 10$}
  \label{fig:fl2}
\end{subfigure}
\caption{$F_\eta(l)$ as functions of arguments $l$ and $\eta$}
\label{fig:fl}
\end{figure}

\subsection{Initial condition}

In this section numerical studies  of the solution of the initial value problem \eqref{eq_main}, \eqref{cond_rand1} and \eqref{cond_rand2} use the initial condition field consisting of the measurements of CMB intensities.  From the mathematical point of view, these values of the CMB intensities form a single realization of a spherical isotropic Gaussian random field. Thus, its spherical harmonics representation \eqref{eq:repres} can be derived. The Python's package "healpy" was used to numerically calculate values of the coefficients $a_{lm}$ from the map of CMB intensities. In the following numerical examples the values $a_{lm}$ for $l=0,1,...,2500,$ and $m=-l,...,l$ and the corresponding  initial condition  $u(0,\theta,\varphi) =\sum_{l=0}^{2500}\sum_{m=-l}^l a_{lm} Y_{lm}(\theta, \varphi)$ were used.

\subsection{Evolution of the solution}\label{sub:evol}
This section provides numerical simulations for the evolution in space and time according to the model \eqref{eq_main}, \eqref{cond_rand1} and \eqref{cond_rand2}, studies spatio-temporal dependencies, evolution of the angular spectrum, and the behaviour of the corresponding extremes. For illustrative purposes, the constants $c,D,r,\eta_\infty$ were  set equal to $1.$

The structure of the CMB data is quite complex with various hot and cold areas located close to each other. Figure \ref{fig:fig1} shows the map of CMB, used as the initial condition for the considered problem \eqref{eq_main} \eqref{cond_rand1}, and \eqref{cond_rand2}.  The representation \eqref{sol_rand} was used to obtain the solution $u(\eta, \theta, \varphi)$ for $\eta=0.001.$  From the  visualisation of $u(0.001,\theta,\varphi)$ in  Figure~\ref{fig:fig2}, one can see that the solution becomes smoother and large deviations (both positive and negative) from the overall mean  are decreasing in time.

\begin{figure}[htb!]
\centering
\begin{subfigure}{.45\textwidth}
  \centering
  \includegraphics[width=0.9\linewidth, height=0.75\linewidth]{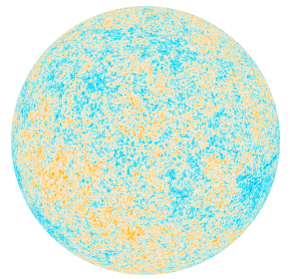}
  \caption{Case of $\eta = 0$}
  \label{fig:fig1}
\end{subfigure}%
\begin{subfigure}{.45\textwidth}
  \centering
  \includegraphics[width=0.9\linewidth, height=0.75\linewidth]{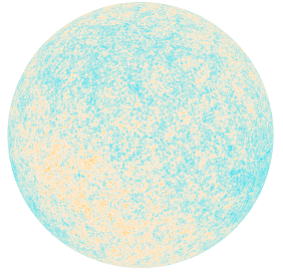}
  \caption{Case of $\eta = 0.001$}
  \label{fig:fig2}
\end{subfigure}
\caption{Realizations of $u(\eta,\theta,\varphi)$.}
\label{fig:test}
\end{figure}

\begin{figure}[htb!]
\centering
\begin{subfigure}{.5\textwidth}
  \centering
  \includegraphics[width=\linewidth, height=0.6\linewidth]{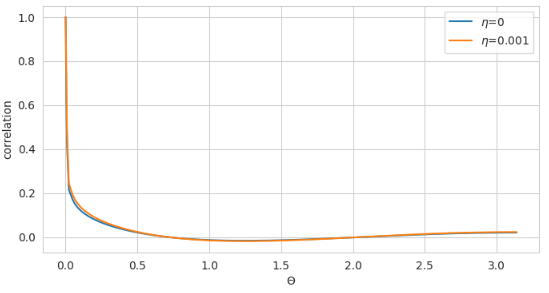}
  \caption{Spatial correlations for $\eta=0$ and $\eta=0.001$}
  \label{fig:correl1}
\end{subfigure}%
\begin{subfigure}{.5\textwidth}
  \centering
  \includegraphics[width=\linewidth, height=0.6\linewidth]{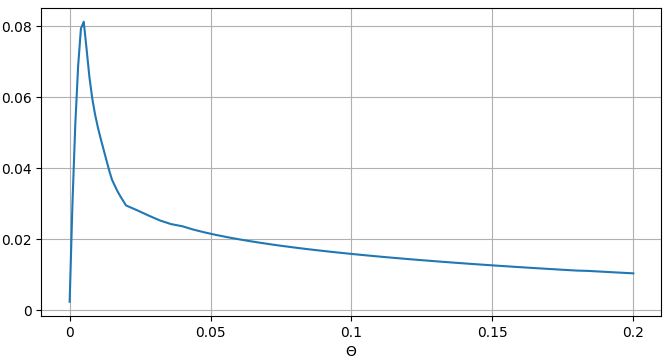}
  \caption{Difference between correlations in Fig \ref{fig:correl1}}
  \label{fig:correl2}
\end{subfigure}
\vskip\baselineskip

\begin{subfigure}{.5\textwidth}
  \centering
  \includegraphics[width=\linewidth, height=0.6\linewidth]{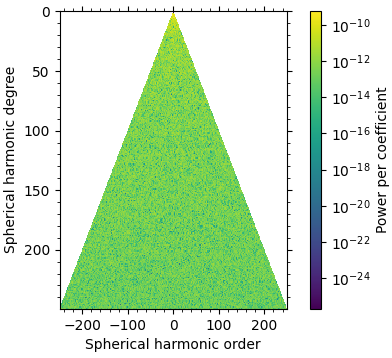}
  \caption{Coefficients $a_{lm}$ for $\eta=0$}
  \label{fig:alm1}
\end{subfigure}%
\begin{subfigure}{.5\textwidth}
  \centering
  \includegraphics[width=\linewidth, height=0.6\linewidth]{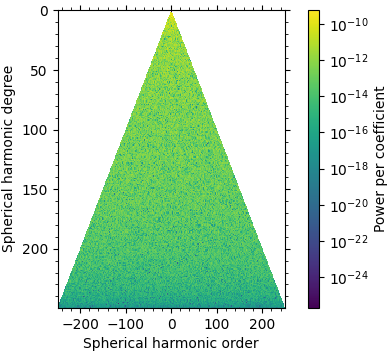}
  \caption{Coefficients $a_{lm}$ for $\eta=0.001$}
  \label{fig:alm2}
\end{subfigure}
\caption{Spatial dependencies and coefficients $a_{lm}$ for $\eta=0$ and $\eta=0.001.$}
\label{fig:correl}
\end{figure}
 Figure \ref{fig:correl1} demonstrates how spatial dependencies change with increasing angular distance between spherical locations. The spatial correlation function decreases rapidly with the increase of the angular distance $\Theta.$ Figures \ref{fig:correl1} and \ref{fig:correl2} show the change of spatial correlations for $\eta=0$ and $\eta=0.001.$

 Figures \ref{fig:alm1} and \ref{fig:alm2} plot values of the spherical harmonic coefficients $a_{lm}$  of the realizations in Figure~\ref{fig:test} for levels up to 200.  It is evident from large levels $l,$ that for $\eta=0.001$ the coefficients $a_{lm}$  became smaller compared to the corresponding coefficients for the case $\eta=0.$ The plots suggest that the magnitudes of the coefficients $a_{lm}$ are decreasing with $\eta.$
\begin{figure}[htb!]
    \centering
    \includegraphics[width=0.65\linewidth, height=0.3\linewidth]{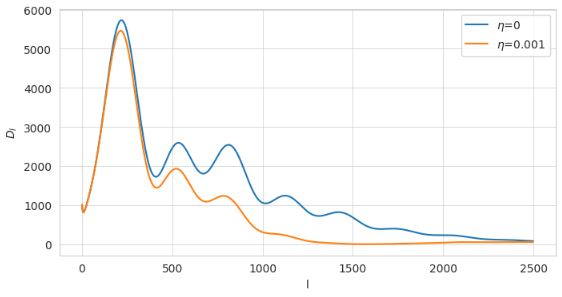}    \caption{Angular spectra of $u(\eta,\theta,\varphi)$ for $\eta=0$ and $\eta=0.001$}
    \label{fig:angspec}
\end{figure}

We also employed the angular power spectrum $C_l$ of CMB provided by ESA \cite{link2}. It is depicted  in Figure~\ref{fig:angspec}  when $\eta=0$.  Then, the spherical angular power spectrum  for the case of $\eta=0.001$ was computed.  The plots of spherical angular power spectra  for $\eta=0$ and $\eta=0.001$  show that they remain almost identical for small values of $l$ and flatten out when $l$ and $\eta$ increase.  This observation aligns with the obtained theoretical findings  and cosmological theories positing that higher multipoles undergo more rapid changes.

To illustrate the structure of space-time dependencies, we also produced a 3D-plot showing normalized covariances (divided by $Eu^2(0,0,0))$  as a function of angular distance $\Theta$ and time~$\eta.$ In Figure~\ref{fig:corspacetimel}, when $\Theta$ and $\eta$ increase, the normalized covariance function rapidly decreases for small values of $\Theta$ and $\eta$ in both space and time; whereas, for other values, the decay of covariances is rather slow. One can see that the variance of the solution $Eu^2(\eta,0,0)$ is decaying with $\eta$ rapidly for small values of $\eta.$

\begin{figure}[htb!]
    \centering
    \includegraphics[width=0.55\linewidth, height=0.4\linewidth]{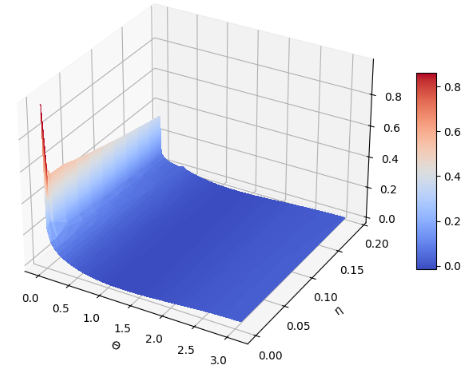}    \caption{Spatio-temporal covariances of $u(\eta,\theta,\varphi).$}
    \label{fig:corspacetimel}
\end{figure}

\subsection{Excursion probabilities}

To analyse the excursion probabilities we used the solutions with the initial value field possessing the power spectrum shown in Figure \ref{fig:angspec} for the case $\eta=0.$ By setting the value $\eta=0.001$ and by using the relationships \eqref{eq:met} and \eqref{geps},  one can compute the canonical pseudometric $d_{0.001}(\Theta)$  generated by $u(0.001,\theta,\varphi)$ on the sphere, along with the associated function $g_{0.001}(\varepsilon).$ The results of these computations are illustrated  in Figure \ref{fig:metr}. The plots  demonstrate a rapid increase of the function $d_{0.001}(\Theta)$ in the neighbourhood of the origin. This observed behaviour of the canonical pseudometric can be attributed to  the rapid change of the spatial covariance function $K_{0.001}(\Theta)$ at the origin, see Figure~\ref{fig:correl1} and consult  \eqref{eq:met} on their relations.

\begin{figure}[htb!]
\centering
\begin{subfigure}{.4\textwidth}
  \centering
  \includegraphics[width=\linewidth, height=0.7\linewidth]{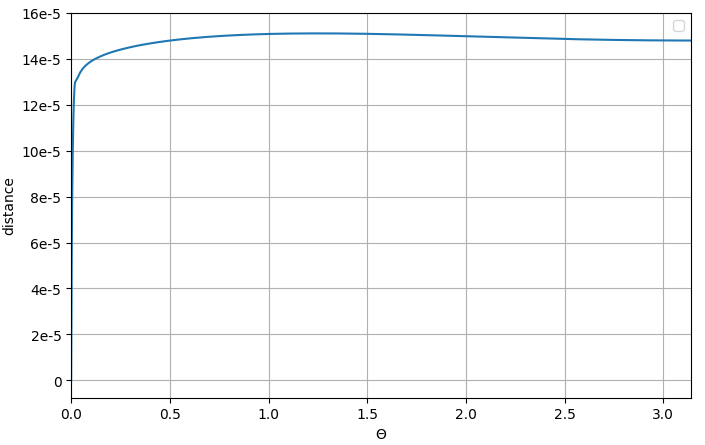}
  \caption{Canonical pseudometric $d_\eta(\Theta)$}
  \label{fig:metr1}
\end{subfigure}%
\begin{subfigure}{.4\textwidth}
  \centering
  \includegraphics[width=\linewidth, height=0.7\linewidth]{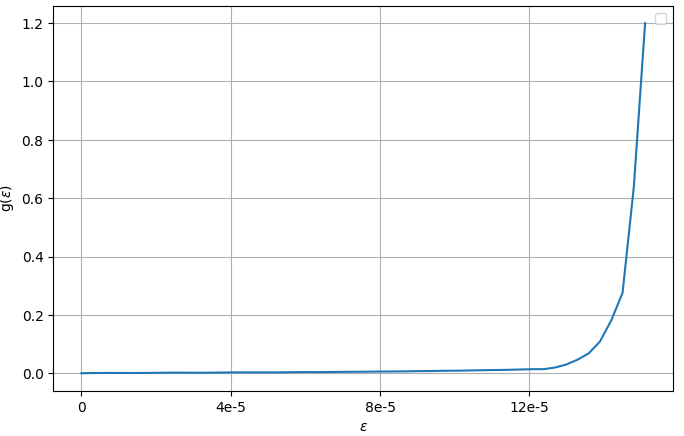}
  \caption{The function $g_{\eta}(\varepsilon)$}
  \label{fig:metr2}
\end{subfigure}
\caption{Canonical pseudometric $d_\eta(\Theta)$ and the function $g_{\eta}(\varepsilon)$ for $\eta=0.001$.}
\label{fig:metr}
\end{figure}

By using the Python package "pyshtools" and the values $C_l$ of the  angular power spectrum provided by ESA, we simulate 300 realizations of spherical Gaussian random fields. Then, those realizations were used as the initial conditions for the problem \eqref{eq_main}, \eqref{cond_rand1}, and \eqref{cond_rand2}. By decomposing each of these 300 realizations into a series of spherical harmonics and by applying the formula \eqref{sol_rand} with $\eta=0.001$, we obtained 300 realizations of the solution for $\eta=0.001.$

\begin{figure}[htb!]
    \centering
    \includegraphics[width=0.7\linewidth, height=0.28\linewidth]{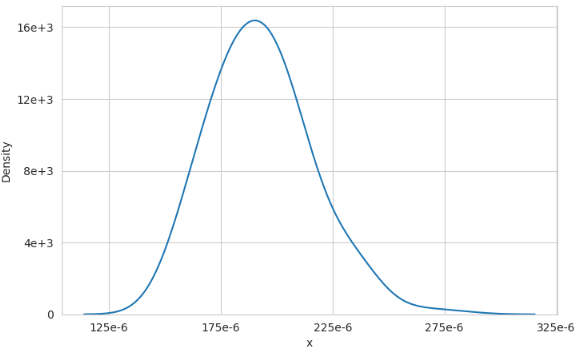}    \caption{Sample distribution of $\sup_{\theta,\varphi}u(0.001, \theta,\varphi)$.}
    \label{fig:dist_sup}
\end{figure}

\begin{figure}[htb!]
    \centering
    \includegraphics[width=0.7\linewidth, height=0.28\linewidth]{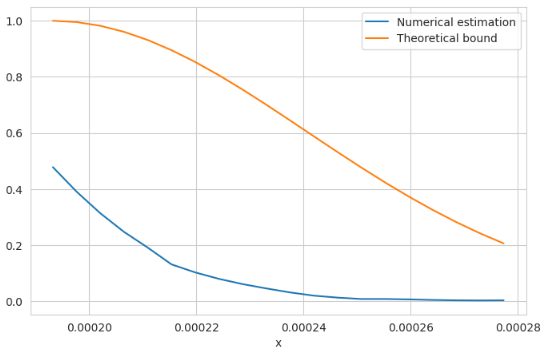}    \caption{Sample probabilities $P(\sup_{\theta,\varphi}u(0.001, \theta,\varphi)>x)$ and theoretical bounds.}
    \label{fig:excur}
\end{figure}

Figure \ref{fig:dist_sup} shows the sample distribution of $\sup_{\theta,\varphi}u(0.001, \theta,\varphi)$ obtained by using the  realizations of the solution. It is known that the distribution of the supremum of a Gaussian process on an interval is positively skewed as its median is less than or equal to its mean, see \cite[Section 12]{lifshits1995gaussian}. The similar behaviour for the case of spherical Gaussian random fields is observed in Figure \ref{fig:dist_sup}. Figure~\ref{fig:excur} provides the estimated excursion probabilities $P(\sup_{\theta,\varphi}u(0.001, \theta,\varphi)>x)$ and the corresponding upper bound given in Theorem~\ref{th:excur}. The observed pattern is analogous for this type of estimators. For small values of $x,$ the upper bound appears conservative,  however, it rapidly approaches the excursion probability as $x$ increases.   Figure~\ref{fig:excur} illustrates the areas  for potential improving of this bound.

\section{Conclusions and future work} The paper investigated spherical diffusion within an expanding space-time framework. It derived  solutions to the deterministic and stochastic diffusion equations, specifically focusing on the exponential growth case. The findings offer insights into the asymptotic and local characteristics of the solutions. Additionally, the paper explored the extremal properties of these solutions. Given the validity of Borell-TIS and Fernique's inequalities for general sets, a similar methodology can be employed to investigate extremes of  solutions on more complex manifolds. Applications of the theoretical findings were demonstrated through simulation studies by modelling  evolutionary scenarios and examining properties of the obtained estimates with the CMB intensities as the initial condition. In future research, it would be interesting to further study properties of this model and the corresponding solutions and to extend the developed approach  to  other SPDEs.

\section*{Compliance with ethical standards}
\textbf{Conflict of interest} The authors declare that there is no conflict of interest regarding the publication of
the paper. \\
\textbf{Funding} This research was supported under the Australian Research Council's Discovery Projects funding scheme (project number  DP220101680).  I.Donhauzer and and A.Olenko also would like to thank for partial support provided by the La Trobe SEMS CaRE grant.


\begin{thebibliography}{10}

\bibitem{link1}
\url{http://irsa.ipac.caltech.edu/data/Planck/release_2/all-sky-maps/maps/component-maps/cmb/COM_CMB_IQU-smica_1024_R2.02_full.fits}.

\bibitem{link2}
\url{https://irsa.ipac.caltech.edu/data/Planck/release_2/ancillary-data/previews/ps_index.html}.

\bibitem{abramowitz1968handbook}
M.~Abramowitz and I.~Stegun.
\newblock {\em Handbook of Mathematical Functions with Formulas, Graphs, and
  Mathematical Tables}.
\newblock US Government printing office, Washington, 1968.

\bibitem{adam2016planck}
R.~Adam, P.~Ade, N.~Aghanim, Y.~Akrami, M.~Alves, F.~Arg{\"u}eso, M.~Arnaud,
  F.~Arroja, M.~Ashdown, J.~Aumont, et~al.
\newblock Planck 2015 results-{I}. {Overview of products and scientific
  results}.
\newblock {\em Astron. Astrophys.}, 594:A1, 2016.

\bibitem{ade2016planck}
P.~Ade, N.~Aghanim, Y.~Akrami, P.~Aluri, M.~Arnaud, M.~Ashdown, J.~Aumont,
  C.~Baccigalupi, A.~J. Banday, R.~Barreiro, et~al.
\newblock Planck 2015 results-{XVI. Isotropy and statistics of the CMB}.
\newblock {\em Astron. Astrophys.}, 594:A16, 2016.

\bibitem{adler2007random}
R.~Adler and J.~Taylor.
\newblock {\em {Random Fields and Geometry}}.
\newblock Springer, New York, 2007.

\bibitem{anh2018approximation}
V.~Anh, P.~Broadbridge, A.~Olenko, and Y.~Wang.
\newblock On approximation for fractional stochastic partial differential
  equations on the sphere.
\newblock {\em Stoch. Environ. Res. Risk Assess.}, 32:2585--2603, 2018.

\bibitem{arfken1972mathematical}
G.~Arfken and H.~Weber.
\newblock {\em {Mathematical Methods for Physicists}}.
\newblock Academic Press, New York, 1972.

\bibitem{atkinson2012spherical}
K.~Atkinson and W.~Han.
\newblock {\em {Spherical Harmonics and Approximations on the Unit Sphere: An
  Introduction}}.
\newblock Springer Science, New York, 2012.

\bibitem{broadbridge2023non}
P.~Broadbridge, S.~Becirevic, and D.~Hoxley.
\newblock {\em Non-Particulate Quantum States of the Electromagnetic Field in
  Expanding Space-Time}.
\newblock IntechOpen, 2023.

\bibitem{broadbridge2020solution}
P.~Broadbridge and K.~Deutscher.
\newblock {Solution of non-autonomous Schr{\"o}dinger equation for quantized de
  Sitter Klein-Gordon oscillator modes undergoing attraction-repulsion
  transition}.
\newblock {\em Symmetry}, 12(6):943, 2020.

\bibitem{broadbridge2019random}
P.~Broadbridge, A.~D. Kolesnik, N.~Leonenko, and A.~Olenko.
\newblock Random spherical hyperbolic diffusion.
\newblock {\em J. Stat. Phys.}, 177:889--916, 2019.

\bibitem{broadbridge2020spherically}
P.~Broadbridge, A.~D. Kolesnik, N.~Leonenko, A.~Olenko, and D.~Omari.
\newblock Spherically restricted random hyperbolic diffusion.
\newblock {\em Entropy}, 22(2):217, 2020.

\bibitem{buldygin2000metric}
V.~Buldygin and Y.~Kozachenko.
\newblock {\em {Metric Characterization of Random Variables and Random
  Processes}}.
\newblock American Mathematical Soc., Providence, 2000.

\bibitem{buonocore2021anomalous}
S.~Buonocore and M.~Sen.
\newblock Anomalous diffusion of cosmic rays: A geometric approach.
\newblock {\em AIP Adv.}, 11(5):055221.

\bibitem{cheng2016excursion}
D.~Cheng and Y.~Xiao.
\newblock {Excursion probability of Gaussian random fields on sphere}.
\newblock {\em Bernoulli}, 22(2):1113--1130, 2016.

\bibitem{dudley1967sizes}
R.~Dudley.
\newblock {The sizes of compact subsets of Hilbert space and continuity of
  Gaussian processes}.
\newblock {\em J. Funct. Anal.}, 1(3):290--330, 1967.

\bibitem{dunkel2009relativistic}
J.~Dunkel and P.~H{\"a}nggi.
\newblock {Relativistic Brownian motion}.
\newblock {\em Phys. Rep.}, 471(1):1--73, 2009.

\bibitem{d2014coordinates}
M.~D’Ovidio.
\newblock Coordinates changed random fields on the sphere.
\newblock {\em Journal of Statistical Physics}, 154:1153--1176, 2014.

\bibitem{fryer2019rcosmo}
D.~Fryer, M.~Li, and A.~Olenko.
\newblock {rcosmo: R package for analysis of spherical, HEALPix and
  cosmological data}.
\newblock {\em {R J.}}, 12(1):206--225, 2020.

\bibitem{fryer2018rcosmo}
D.~Fryer, A.~Olenko, M.~Li, and Y.~Wang.
\newblock rcosmo: Cosmic microwave background data analysis. {R} package
  version 1.1.3.
\newblock Available at CRAN, 2021.

\bibitem{han2017observing}
J.~Han.
\newblock Observing interstellar and intergalactic magnetic fields.
\newblock {\em Annu. Rev. Astron. Astrophys.}, 55:111--157, 2017.

\bibitem{hill2018formal}
J.~Hill.
\newblock On the formal origins of dark energy.
\newblock {\em Zeitschrift f{\"u}r angewandte Mathematik und Physik},
  69(5):133, 2018.

\bibitem{hill2019some}
J.~Hill.
\newblock Some further comments on special relativity and dark energy.
\newblock {\em Zeitschrift f{\"u}r angewandte Mathematik und Physik}, 70(1):5,
  2019.

\bibitem{jokipii1969stochastic}
J.~Jokipii and E.~Parker.
\newblock Stochastic aspects of magnetic lines of force with application to
  cosmic-ray propagation.
\newblock {\em Astrophys. J.}, 155:777--799, 1969.

\bibitem{lang2015isotropic}
A.~Lang and C.~Schwab.
\newblock Isotropic {G}aussian random fields on the sphere: regularity, fast
  simulation and stochastic partial differential equations.
\newblock {\em Ann. Appl. Probab.}, 25:3047--3094, 2015.

\bibitem{LOV}
N.~Leonenko, A.~Olenko, and J.~Vaz.
\newblock On fractional spherically restricted hyperbolic diffusion random
  field.
\newblock {\em arXiv: 2310.03933}, 2023.

\bibitem{leonenko_ruiz-medina_2023}
N.~Leonenko and M.~Ruiz-Medina.
\newblock {Sojourn functionals for spatiotemporal Gaussian random fields with
  long memory}.
\newblock {\em J. Appl. Probab.}, 60(1):148–165, 2023.

\bibitem{lifshits1995gaussian}
M.~Lifshits.
\newblock {\em {Gaussian Random Functions}}.
\newblock Springer, Dordrecht, 1995.

\bibitem{makogin}
V.~Makogin and E.~Spodarev.
\newblock {Limit theorems for excursion sets of subordinated Gaussian random
  fields with long-range dependence}.
\newblock {\em Stochastics}, 94(1):111--142, 2022.

\bibitem{marinucci2011random}
D.~Marinucci and G.~Peccati.
\newblock {\em {Random Fields on the Sphere: Representation, Limit Theorems and
  Cosmological Applications}}.
\newblock Cambridge University Press, 2011.

\bibitem{olenko2006upper}
A.~Olenko.
\newblock Upper bound on $\sqrt{x}$${J}_\nu(x)$ and its applications.
\newblock {\em Integral Transforms Spec. Funct.}, 17(6):455--467, 2006.

\bibitem{DLMF}
F.~Olver, A.~O. Daalhuis, D.~Lozier, B.~Schneider, R.~Boisvert, C.~Clark, B.~S.
  B.~Mille~and, H.~Cohl, and {M. McClain, eds.}
\newblock {NIST Digital Library of Mathematical Functions}, 2020.
\newblock \href{http://dlmf.nist.gov/}{http://dlmf.nist.gov/}.

\bibitem{orsingher1987stochastic}
E.~Orsingher.
\newblock Stochastic motions on the 3-sphere governed by wave and heat
  equations.
\newblock {\em J. Appl. Probab.}, 24(2):315--327, 1987.

\bibitem{piterbarg1996asymptotic}
V.~Piterbarg.
\newblock {\em {Asymptotic Methods in the Theory of Gaussian Processes and
  Fields}}.
\newblock American Mathematical Society, Providence, 1996.

\bibitem{sakhno2023estimates}
L.~Sakhno.
\newblock Estimates for distributions of suprema of spherical random fields.
\newblock {\em Stat. Optim. Inf. Comput.}, 11(2):186--195, 2023.

\bibitem{shtykovskiy2010thermal}
P.~Shtykovskiy and M.~Gilfanov.
\newblock Thermal diffusion in the intergalactic medium of clusters of
  galaxies.
\newblock {\em Mon. Notices Royal Astron. Soc.}, 401(2):1360--1368, 2010.

\bibitem{talagrand}
M.~Talagrand.
\newblock {Sharper Bounds for Gaussian and Empirical Processes}.
\newblock {\em Ann. Probab.}, 22(1):28 -- 76, 1994.

\bibitem{talagrand1996majorizing}
M.~Talagrand.
\newblock Majorizing measures: the generic chaining.
\newblock {\em Ann. Probab.}, 24(3):1049--1103, 1996.

\bibitem{talagrand2014upper}
M.~Talagrand.
\newblock {\em {Upper and Lower Bounds for Stochastic Processes}}.
\newblock Springer, 2014.

\bibitem{watson1944}
G.~Watson.
\newblock {\em {A Treatise on the Theory of Bessel Functions}}.
\newblock Cambridge University Press, 1944.

\bibitem{Weinberg}
S.~Weinberg.
\newblock {\em Cosmology}.
\newblock {Oxford University Press}, 2008.

\end{thebibliography}
\end{document}